\documentclass{article}
\title{\textsc{Robust Optimal Control of Arbitrarily Switched Systems: A Path-Complete Framework}%
\thanks{This project has received funding from the European Research Council (ERC) under the European Union's Horizon 2020 research and innovation programme (grant agreement No.~864017, L2C), from the Horizon Europe programme (grant agreement No.~101177842, Unimaas), and from the ARC (French Community of Belgium) project SIDDARTA.}}

\author{
Léa Ninite$^\star$\thanks{Léa Ninite is a Research Fellow of the Fonds de la Recherche Scientifique -- FNRS.\\ E-mails: \texttt{\{lea.ninite,adrien.banse,guillaume.berger,raphael.jungers\}@uclouvain.be}.},
Adrien Banse$^\star$,
Guillaume O.~Berger$^\star$,
Raphaël M.~Jungers$^\star$\\[4pt]
$^\star$ICTEAM Institute, UCLouvain, Louvain-la-Neuve, Belgium
}
\usepackage[T1]{fontenc}
\usepackage[utf8]{inputenc}

\usepackage{amsmath,amssymb,mathtools}
\mathtoolsset{showonlyrefs}

\usepackage{graphicx}
\usepackage{subfig}
\usepackage{tikz}
\usetikzlibrary{arrows.meta,positioning,calc}
\usepackage{circuitikz}
\usepackage[table]{xcolor}

\usepackage{algorithm}
\usepackage{algpseudocode}

\usepackage{booktabs}

\usepackage{fullpage}
\usepackage{comment}
\usepackage{soul}

\usepackage{hyperref}
\hypersetup{
  colorlinks=true,
  linkcolor=black,
  citecolor=black,
  urlcolor=black
}
\usepackage[capitalize]{cleveref}

\newtheorem{definition}{Definition}
\newtheorem{theorem}{Theorem}
\newtheorem{corollary}{Corollary}
\newtheorem{proposition}{Proposition}

\newtheorem{remark}{Remark}

\newenvironment{proof}[1][Proof]{%
  \noindent\textit{#1. }%
}{\hfill$\square$\par}

\DeclareMathOperator*{\argmax}{arg\,max}
\DeclareMathOperator*{\argmin}{arg\,min}

\newcommand{\Mset}{\langle M\rangle}

\renewcommand{\Re}{\mathbb{R}}
\newcommand{\Ne}{\mathbb{N}}

\newcommand{\calS}{\mathcal{S}}
\newcommand{\calF}{\mathcal{F}}
\newcommand{\calE}{\mathcal{E}}
\newcommand{\calG}{\mathcal{G}}
\newcommand{\calT}{\mathcal{T}}
\newcommand{\calH}{\mathcal{H}}

\newcommand{\nxn}{n\times n}

\begin{document}
\maketitle
\begin{abstract}
This paper addresses the robust control of switched systems under arbitrary switching with performance guarantees. We propose a framework that jointly synthesizes a feedback policy and a certified upper bound on its corresponding infinite-horizon closed-loop value function. The proposed upper bound not only certifies the performance of the synthesized policy, but can also be optimized during controller synthesis.
More precisely, our approach associates functions with the nodes of a path-complete graph and enforces graph-based Bellman inequalities along its edges. Exploiting a newly introduced notion of reachability graph, these functions are combined into both a feedback policy and a certified upper bound on its corresponding closed-loop value function, expressed as a pointwise min--max combination of the graph-indexed functions.
For linear switched systems with quadratic stage costs, the proposed framework admits tractable computational formulations based on semidefinite programming and alternating optimization. Numerical experiments, including a building temperature regulation benchmark, demonstrate the practical usefulness of the proposed approach both for direct feedback control using the synthesized policy and for model predictive control using the certified upper bound as a terminal cost.
\end{abstract}

\section{Introduction}

Dynamical systems operating under uncertainty are increasingly prevalent, with applications in autonomous driving, search-and-rescue robotics, smart grids, and smart buildings \cite{Alamir2022,Pairet2022}.
Ensuring the safe deployment of such systems requires rigorous mathematical tools that can formally certify whether prescribed requirements, such as safety and performance specifications, are satisfied \cite{baier2008principles,Lee2016,Mitra2021CPS}.\\\\
In this paper, we consider arbitrarily switched systems (or switched systems for short).
These are multi-modal dynamical systems of the form 
\[
    x_{k+1} = f_{i_k} (x_k, u_k), \quad i_k \in \{1,\ldots,M\}.
\]
Here the mode dynamics $f_i$ are known for every $i$, but the uncertainty comes from the fact that any mode $i$ can be applied at each step $k$.
These systems arise naturally in many applications, including mechanical systems~\cite{blanchini2012constant}, power systems~\cite{sanchez2019practical}, networked systems~\cite{donkers2011stability}, and biological or
epidemiological models~\cite{hernandez2011optimal}; see also \cite{liberzon2003switching} for a comprehensive introduction to switched systems and their applications.
More generally, they provide paradigmatic examples of uncertain systems where robust control is known to be challenging, even in systems with linear dynamics \cite{jungers2009joint,sun2011stability}.\\
This paper is concerned with the robust optimal control of switched systems. Our goal is to design policies that achieve a low \emph{infinite-horizon value function}, which denotes the worst-case infinite-horizon cost-to-go, i.e., the cost under the worst possible infinite mode sequence (see, e.g., \cite{Nilim2005Robust}, and see Definition \ref{definition:cost_to_go_val_function_policy} for a precise definition). Throughout the paper, we will drop the horizon prefix when clear from the context. To achieve tractability and robustness, we design feedback policies that minimize a certified upper bound on the corresponding closed-loop value function, thereby providing an efficient controller while simultaneously addressing the need for guarantees described above. These bounds are relevant in safety and energy-critical applications, where one seeks guarantees on performance and stability even under adversarial operating conditions~\cite{liberzon2003switching,sun2011stability,blanchini2008set}.
They also find applications beyond certification; for instance, they can serve as terminal costs in robust model predictive control (RMPC, also called Min--Max MPC) for switched systems, enabling finite-horizon policies with infinite-horizon performance guarantees (see, e.g., \cite[Chapter~2]{Rawlings} for an introduction).\\\\
Finding such controllers and bounds is a well-understood problem in simpler settings. For example, in the case of linear systems with quadratic stage costs, the value function is quadratic and can be computed efficiently through the solution of the algebraic Riccati equation~\cite{mehrmann1991autonomous}. Similarly, in finite systems, the value function can be easily computed through the robust Bellman equation~\cite{bertsekas2012dynamic,Nilim2005Robust,suilen2024robust} which reduces to a set of piecewise-linear equations to solve.
These properties make the value function both analytically tractable and computationally efficient to evaluate.
The situation changes drastically for switched systems with continuous state and input spaces.
For instance, in the case of switched linear systems with quadratic stage cost and where the mode is a \emph{controlled} input, it was shown in \cite{zhang2009value} that the optimal \emph{finite-horizon} value function is piecewise quadratic, with a number of pieces that may grow exponentially with the horizon, and that the optimal \emph{infinite-horizon} value function is in general neither quadratic nor smooth.
A similar phenomenon arises under \emph{arbitrary} switching, where the number of possible switching sequences grows exponentially with time.
In \cite{berger2025differentiability}, the author shows that, in this setting, the infinite-horizon value function can be non-differentiable almost everywhere, even for switched systems with linear dynamics and quadratic stage cost. These difficulties motivate the development of tractable methods for designing robust controllers, together with guarantees.
To this end, we leverage the powerful \emph{path-complete framework}~\cite{ahmadi2014joint}, initially introduced in the context of stability analysis of switched systems, and combine it with \emph{robust Bellman inequalities}~\cite{Nilim2005Robust,suilen2024robust}, initially introduced in robust dynamic programming to characterize bounds on the value function.
\paragraph*{Contributions}
We use the path-complete framework to define graph-based relaxations of robust Bellman inequalities, leading to a tractable and guaranteed robust control method for arbitrarily switched systems. Our main contributions are as follows:
\begin{itemize}
    \item We associate functions with the nodes of a \emph{path-complete graph} (see Definition~\ref{def:pc_graph}) and enforce graph-based Bellman inequalities along its edges. By exploiting the newly introduced notion of \emph{reachability graph} (Definition~\ref{def:reachability_graph}), we show how these functions can be combined to construct both an explicit feedback policy and a certified upper bound on its corresponding closed-loop value function, expressed as a suitable pointwise min--max combination of the graph-indexed functions.

    \item We derive tractable computational formulations under suitable assumptions. Complete graphs (a particular class of path-complete graphs, see Definition~\ref{definition:complete_cocomplete}) admit an exact semidefinite programming formulation, while general path-complete graphs can be handled through an alternating optimization procedure.

    \item We demonstrate the practical relevance of the proposed framework on a building temperature regulation benchmark. In particular, we show that the certified upper bound can be used as a terminal cost in robust MPC.
\end{itemize}

The main idea of the proposed framework is illustrated in Figure~\ref{fig:overview}.\\\\
This paper extends our preliminary conference version~\cite{ninite2026pathcomplete}. For autonomous switched systems, the conference paper established certified upper bounds on the value function only for complete and co-complete graphs (see Definition~\ref{definition:complete_cocomplete}), while for controlled switched systems it was restricted to complete graphs and piecewise-linear feedback policies. In contrast, the framework developed here applies to general controlled switched systems of the form \(
x_{k+1}=f_{i_k}(x_k,u_k)\), uses arbitrary path-complete graph structures to derive certified upper bounds, and provides a unified methodology for simultaneously synthesizing a feedback policy and a certified upper bound on the corresponding closed-loop value function. This additional flexibility can lead to tighter value function bounds and improved control policies. Finally, the present paper also extends the computational and experimental scope of the conference paper by developing tractable synthesis methods for general path-complete graphs and by demonstrating a practical application of the proposed certified upper bounds as terminal costs in robust MPC.

\definecolor{myblue}{RGB}{44,90,160}
\begin{figure}[H]
\centering
\footnotesize

\begin{tikzpicture}[
    scale=1,
    transform shape,
    >=Latex,
    state/.style={
        circle,
        fill=gray!40,
        minimum size=6.5mm,
        inner sep=0pt
    },
    box/.style={
        rounded corners=7pt,
        fill=gray!15,
        draw=none,
        inner sep=4pt,
        align=center
    }
]

\node[box, minimum width=6.6cm] (top) {
    {\bfseries Robust Bellman inequality}\\[1ex]
    $\displaystyle
    V(x)\ge c(x,\phi(x))
    + \max_i V\!\bigl(f_i(x,\phi(x))\bigr)
    \quad \forall \, x
    $
};

\node[box, below=3.3cm of top, minimum width=.5cm] (bottom) {
{\bfseries Graph-based Bellman inequalities}\\[1ex]

{\color{myblue}
$\displaystyle
V_\alpha(x)\ge c(x,\pi_\alpha(x))
+V_\alpha\!\bigl(f_1(x,\pi_\alpha(x))\bigr)
\quad \forall \, x
$
}\\[0.4ex]

{\color{teal}
$\displaystyle
V_\alpha(x)\ge c(x,\pi_\alpha(x))
+V_\beta\!\bigl(f_2(x,\pi_\alpha(x))\bigr)
\quad \forall \, x
$
}\\[0.4ex]

{\color{red}
$\displaystyle
V_\beta(x)\ge c(x,\pi_\beta(x))
+V_\beta\!\bigl(f_2(x,\pi_\beta(x))\bigr)
\quad \forall \, x
$
}\\[0.4ex]

{\color{orange}
$\displaystyle
V_\beta(x)\ge c(x,\pi_\beta(x))
+V_\alpha\!\bigl(f_1(x,\pi_\beta(x))\bigr)
\quad \forall \, x
$
}
};

\draw[thick,->]
    ($(bottom.north)+(0.55,0.05)$) -- ($(top.south)+(0.55,-0.05)$);

\draw[thick,->]
    ($(top.south)+(-0.55,-0.05)$) -- ($(bottom.north)+(-0.55,0.05)$);

\node[state] (Va) at ($(bottom.north)+(-3.1cm,1.80cm)$) {$V_\alpha$};
\node[state] (Vb) at ($(Va)+(1.35cm,0)$) {$V_\beta$};

\draw[myblue,->]
    (Va.160) to[out=140,in=220,looseness=8]
    node[left,font=\scriptsize] {$1$}
    (Va.200);

\draw[red,->]
    (Vb.20) to[out=40,in=-40,looseness=8]
    node[right,font=\scriptsize] {$2$}
    (Vb.-20);

\draw[teal,->,bend left=25]
    (Va) to node[above,font=\scriptsize] {$2$} (Vb);

\draw[orange,->,bend left=25]
    (Vb) to node[below,font=\scriptsize] {$1$} (Va);

\node[
    draw,
    rounded corners=5pt,
    fill=gray!5,
    align=left,
    inner sep=3pt
] (defs)
at ($(bottom.north)+(2.3cm,1.60cm)$)
{
$\displaystyle V(x)=\min_i V_i(x)$\\
$\displaystyle \kappa(x)=\argmin_i V_i(x)$\\
$\displaystyle \phi(x)=\pi_{\kappa(x)}(x)$
};

\end{tikzpicture}
\caption{Overview of the proposed approach. A path-complete graph is used to generate a collection of graph-based Bellman inequalities, one for each edge of the graph (matching colors indicate the correspondence). Instead of enforcing a single Bellman inequality directly, we enforce these graph-based inequalities on several functions. These functions are then combined to define both a feedback control policy and a certified upper bound on the infinite-horizon value function, which satisfies the robust Bellman inequality.}
\label{fig:overview}
\end{figure}

\paragraph*{Related work}
Several works studied the problem of optimal control of switched systems when both the control input and the switching sequence are decision variables (called \emph{optimal control and mode scheduling} problem)~\cite{rantzer2005approximate,gorges2011optimal,zhang2012infinite,wu2020optimal}. In contrast, this paper considers arbitrary switching, which makes the problem fundamentally different; for instance, while optimal control and mode scheduling is a minimization problem over control input and mode sequence, optimal control under arbitrary switching amounts to solve a \emph{minimax} optimization problem (see Definition~\ref{def:optimal-value}).\\\\
Our problem is an instance of robust control.
We present hereafter relevant approaches for robust control and explain the differences with our work.
Robust Lyapunov functions \cite{DEOLIVEIRA1999anew,BLANCHINI1995nonquadratic} and robust barrier certificates \cite{prajna2004safety,prajna2007aframework} focus on stability and safety analysis respectively, leaving the optimal control aspects aside.
Symbolic control~\cite{Belta2017FormalMethods,Tabuada2009} can provide certified controllers for robust optimal control problems~\cite{calbert2021alternating} but this approach, which is very general, usually does not scale with system dimension, since it proceeds by partitioning the state space into small cells.
In particular, it usually does not exploit the structure of switched linear systems.
Robust model predictive control (RMPC)~\cite{Bemporad1999robustmpc} also addresses optimal control problems but faces two main limitations compared to our approach: (i) it does not provide guarantees unless a suitable terminal cost function is used (which essentially requires to solve an RDP problem; see below), and (ii) as we show in our experiments, this approach is computationally demanding because it requires to solve an optimization problem at each time step. Furthermore, the number of variables and constraints of the optimization problem grows exponentially with the prediction horizon.
Finally, robust dynamic programming (RDP)~\cite{fujita2004nondeterministic,suilen2024robust,Nilim2005Robust} provides a powerful mathematical framework to rigorously study robust optimal control problems.
Nevertheless, due to tractability issues, this framework has been mainly used for systems with finite state and input spaces \cite{iyengar2005RDP}.
To the best of our knowledge, our work is the first one to effectively apply RDP to switched systems with continuous state and input spaces, by combining it with path-complete graphs to balance tractability and accuracy.\\\\
Our work leverages path-complete methods, which were introduced for stability analysis of switched systems~\cite{ahmadi2014joint} and have been recently extended to controller synthesis. In particular,~\cite{dellarossa2024graph,lima2025feedback} develop tractable methods for the synthesis of stabilizing controllers using path-complete graphs. However, these works focus on stabilization and restrict attention to a subset of path-complete graphs, called complete graphs. By contrast, the framework developed in this paper addresses performance certification through value function upper bounds and applies to \emph{any} path-complete graphs.

\paragraph*{Outline}
Section~\ref{sec:pb_statement} introduces the problem formulation and
the value function. Section~\ref{sec:preliminaries} reviews the
necessary background on robust Bellman inequalities and path-complete
graphs. The main theoretical results are presented in
Section~\ref{section:ub_general_results}, where path-complete graphs are
used to construct control policies and certified upper bounds on the corresponding value function. Section~\ref{sec:tractable} presents tractable
procedures for computing such policies and bounds. Finally,
Section~\ref{sec:numerical_exp} illustrates the proposed framework on a
toy example and a temperature regulation problem.
It also compares our methods against finite-horizon RMPC, and investigates the use of our upper bounds as terminal costs for RMPC. 

\paragraph*{Notation}
Given $M\in \mathbb N$, we let $\Mset\coloneqq\{1,\dots,M\}$.
The set of $n\times n$ symmetric positive definite (semidefinite) matrices is denoted by $\Re^\nxn_{\succ0}$ ($\Re^\nxn_{\succeq0}$).
The set of nonnegative-valued functions defined on $\Re^n$ (i.e., functions $f:\Re^n\to\Re_{\geq0}$) is denoted by $\calF^n_{\geq0}$. The set of functions from $\Re^n$ to $\Re^m$ is denoted by $(\Re^m)^{\Re^n}$.

\section{Problem statement}\label{sec:pb_statement}
We consider a discrete-time switched system of the form
\begin{equation}
x_{k+1} = f_{\sigma(k)} (x_k, u_k), \quad k\in\Ne,
\label{eq:switched_general}
\end{equation}
where $x_k\in\Re^n$ is the \emph{state} at time $k$,  $u_k\in\Re^m$ is the \emph{control input} at time $k$, $\sigma(k)\in\Mset$ is the \emph{mode} at time $k$ and for all $i\in\Mset$, $f_i:\Re^n \times \Re^m \to\Re^n$.
The function $\sigma : \Ne \to \Mset$ is called the \emph{switching signal}.

Given $x\in\Re^n$, a policy $\phi:\Re^n \to \Re^m$, and a switching signal $\sigma:\Ne\to\Mset$, we denote by $\xi^\phi(\cdot,x,\sigma)$ the solution of~\eqref{eq:switched_general} with switching signal $\sigma$ and policy $\phi$, i.e., $\xi^\phi(k,x,\sigma)=x_k$ for all $k\in\Ne$ where $x_0=x$ and $x_{k+1}=f_{\sigma(k)}(x_k,\phi(x_k))$ for any $k\in \Ne$.

We also consider a cost function $c:\Re^n \times \Re^m \to\Re_{\geq0}$, mapping state--input pairs to nonnegative cost values.

In this work, the switching signal is treated as an external and \emph{uncontrolled} signal.
Accordingly, system performance is evaluated in a worst-case sense over all switching
signals, which motivates the following definitions.

\begin{definition}[Closed-loop value function]\leavevmode\\
Consider the switched system~\eqref{eq:switched_general} under a fixed policy
$\phi:\Re^n \to \Re^m$, and a cost function $c:\Re^n \times \Re^m \to \Re_{\geq 0}$.
For each switching signal $\sigma:\Ne \to \Mset$, the \emph{cost-to-go under policy $\phi$}
is the map $J_\sigma^\phi:\Re^n \to [0,\infty]$ defined by
\[
J_\sigma^\phi(x) \coloneqq \sum_{k=0}^{\infty} 
c\!\left(\xi^\phi(k,x,\sigma),\, \phi(\xi^\phi(k,x,\sigma))\right),
\]
with the convention that $J_\sigma^\phi(x)=\infty$ if the series diverges.
\\
The \emph{closed-loop value function under policy $\phi$} is defined as the worst-case cost over all
switching signals:
\[
J^\phi(x) \coloneqq \sup_{\sigma:\Ne \to \Mset} J_\sigma^\phi(x).
\]
\label{definition:cost_to_go_val_function_policy}
\end{definition}

\begin{definition}[Optimal value function]\label{def:optimal-value}\leavevmode\\
The \emph{optimal value function} of system~\eqref{eq:switched_general} with cost function $c:\Re^n \times \Re^m \to \Re_{\geq 0}$ is defined as the pointwise infimum of the closed-loop value functions over all
policies:
\[
J^\star (x) \coloneqq \inf_{\phi:\Re^n \to \Re^m} J^\phi(x).
\]
\label{definition:optimal_value_function}
\end{definition}
In this paper, our goal is to \emph{jointly} synthesize a policy \(\phi\) and a certified upper bound on its corresponding value function \(J^\phi\), namely a function \(V:\mathbb{R}^n\to\mathbb{R}_{\ge 0}\) satisfying
\[
V(x)\ge J^\phi(x), \qquad \forall \, x\in\mathbb{R}^n.
\]
Note that since \(J^\star(x)\le J^\phi(x)\) for any policy \(\phi\), such a function \(V\) also provides a certified upper bound on the optimal value function \(J^\star\). However, it provides more than an upper bound on \(J^\star\): it certifies the performance of a specific synthesized policy \(\phi\).\footnote{In fact, our framework allows to minimize different metrics associated with the upper bound $V$ (like the value at some given point $x_0$) in order to find policies $\phi$ for which $J^\phi$ is close to $J^\star$.} To address the synthesis problem, we leverage the path-complete framework and robust Bellman inequalities introduced in the next section.

\section{Preliminaries}\label{sec:preliminaries}
\subsection{Robust Bellman inequalities}\label{sec:robust_bellman}
In this section, we draw inspiration from the theory of robust Markov decision processes to develop a dynamic programming framework for switched systems under arbitrary switching. In robust Markov decision processes, the value function is characterized as the solution of a robust Bellman equation, i.e., a recursive functional equation obtained by replacing the expectation in the classical Bellman equation~\cite{bertsekas2012dynamic} with a worst-case operator~\cite{Nilim2005Robust,suilen2024robust}. Related worst-case dynamic programming formulations also arise for nondeterministic systems~\cite{fujita2004nondeterministic}. We recall below the corresponding Bellman equation for switched systems, which serves as the starting point for the Bellman inequalities introduced later in Proposition~\ref{prop:bellman-upper}.

\begin{proposition}[Robust Bellman equation]\leavevmode\\
Consider the switched system~\eqref{eq:switched_general} under a fixed policy
$\phi:\Re^n \to \Re^m$, and a cost function $c:\Re^n \times \Re^m \to \Re_{\ge 0}$. The closed-loop value function $J^\phi$ satisfies the following \emph{robust Bellman equation}:
\begin{equation}
J^\phi(x)
=
c(x,\phi(x))
+
\max_{i\in\Mset} J^\phi(f_i(x,\phi(x))),
\quad \forall \, x\in\Re^n.
\label{eq:robust_bellman_phi}
\end{equation}
\end{proposition}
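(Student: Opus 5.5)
The plan is to split the value function according to the first mode and to use the shift structure of switching signals. Fix $x\in\Re^n$ and write $u=\phi(x)$. For a switching signal $\sigma$, let $\sigma^+$ denote its shift, defined by $\sigma^+(k)=\sigma(k+1)$. Directly from the definition of $\xi^\phi$, we have $\xi^\phi(k+1,x,\sigma)=\xi^\phi(k,f_{\sigma(0)}(x,u),\sigma^+)$ for all $k\in\Ne$. Summing the nonnegative stage costs then gives the one-step decomposition
\[
J^\phi_\sigma(x)=c(x,u)+J^\phi_{\sigma^+}\bigl(f_{\sigma(0)}(x,u)\bigr),
\]
which holds in $[0,\infty]$. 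Reindexing a series of nonnegative terms is legitimate even when it diverges, so both sides are infinite together.

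Next, I would take the supremum over $\sigma$, organized by the first mode. The map $\sigma\mapsto(\sigma(0),\sigma^+)$ is a bijection between switching signals and $\Mset\times\Mset^{\Ne}$, because the switching is arbitrary. Hence
\[
\sup_\sigma J^\phi_\sigma(x)=\max_{i\in\Mset}\ \sup_{\tau} \Bigl(c(x,u)+J^\phi_\tau\bigl(f_i(x,u)\bigr)\Bigr)=c(x,u)+\max_{i\in\Mset} J^\phi\bigl(f_i(x,u)\bigr).
\]
Two facts justify this step. First, a supremum over a finite disjoint union equals the maximum of the suprema over the pieces. Second, adding the finite constant $c(x,u)$ commutes with the supremum in $[0,\infty]$. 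The maximum over $\Mset$ is attained because $\Mset$ is finite, which is why the statement can write $\max$ rather than $\sup$.

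The main obstacle is mostly bookkeeping rather than a real difficulty. The shift identity for $\xi^\phi$ must be verified by induction on $k$, and it relies on the policy being a stationary state feedback. The identities must also be interpreted in the extended reals, since $J^\phi$ can take the value $+\infty$. In that case both sides of the robust Bellman equation are $+\infty$, so the equation still holds as an identity in $[0,\infty]$.
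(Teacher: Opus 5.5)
Your proof is correct. The shift identity $\xi^\phi(k+1,x,\sigma)=\xi^\phi(k,f_{\sigma(0)}(x,\phi(x)),\sigma^+)$ follows by induction from the stationarity of $\phi$. The one-step decomposition of $J^\phi_\sigma$ is valid in $[0,\infty]$, since the stage costs are nonnegative and $c(x,\phi(x))$ is finite. Splitting the supremum by the first mode uses exactly the product structure $\Mset^{\Ne}\cong\Mset\times\Mset^{\Ne}$.

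The paper does not prove this proposition at all. It recalls it from the robust dynamic programming literature (robust MDPs and nondeterministic systems) without proof, so your argument supplies a self-contained proof rather than following or departing from one in the text.

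Your direct route has two advantages over importing the robust-MDP result. First, it works with the paper's own definition, where the adversary is an open-loop switching signal rather than a possibly history-dependent choice of nature as in robust MDPs. This distinction is harmless here because the dynamics are deterministic once $\sigma$ is fixed, but your proof avoids having to argue it. Second, it makes explicit why equality holds rather than a mere inequality, namely the rectangularity of the set of switching signals. It also shows that the identity remains valid when $J^\phi$ takes the value $+\infty$, e.g., when $\phi$ fails to make the cost summable along some switching signal.

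The only related argument in the paper is the telescoping proof of Proposition~\ref{prop:bellman-upper}, which gives a one-sided bound for supersolutions. Combined with your result, it shows that whenever $J^\phi$ is finite-valued, it is the smallest nonnegative function satisfying the robust Bellman inequality~\eqref{eq:robust_bellman_upper}.
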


In practice, when restricting the search to a prescribed function class
(e.g., quadratic functions), it is generally impossible to satisfy the
robust Bellman equation exactly. We therefore consider a relaxation of
this equation, called \emph{robust Bellman inequality}.\footnote{Bellman inequalities have
also been extensively used in stochastic optimal control to derive
tractable bounds on the value function; see,
e.g.,~\cite{lu2021convex,rantzer2005approximate,lincoln2006relaxing}.}
The following proposition, adapted from~\cite[Proposition~2]{ninite2026pathcomplete}, shows that any function satisfying this functional inequality yields an upper bound on the closed-loop value
function.
\begin{proposition}[Robust Bellman inequality]
Consider the switched system~\eqref{eq:switched_general} under a fixed policy
$\phi:\Re^n \to \Re^m$, and a cost function
$c:\Re^n \times \Re^m \to \Re_{\ge 0}$.
Let $J^\phi$ denote the corresponding closed-loop value function. Let $V:\Re^n\to\Re_{\geq0}$ satisfy the following \emph{robust Bellman inequality}:
\begin{equation}
V(x)
\geq
c(x,\phi(x))
+
\max_{i\in\Mset} V(f_i(x,\phi(x))),
\: \: \: \forall \, x\in\Re^n.
\label{eq:robust_bellman_upper}
\end{equation}
Then,
\[
V(x)\geq J^\phi(x),
\quad \forall \, x\in\Re^n.
\]
\label{prop:bellman-upper}
\end{proposition}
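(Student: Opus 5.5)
The plan is to fix an arbitrary initial state $x\in\Re^n$ and an arbitrary switching signal $\sigma:\Ne\to\Mset$, and to show $V(x)\ge J^\phi_\sigma(x)$. Taking the supremum over $\sigma$ then yields $V(x)\ge J^\phi(x)$. Since $\sigma$ is arbitrary, no measurability or selection issue arises: the supremum in Definition~\ref{definition:cost_to_go_val_function_policy} is only over signals, and each signal is handled separately.

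Write $x_k\coloneqq \xi^\phi(k,x,\sigma)$ and $c_k\coloneqq c(x_k,\phi(x_k))\ge 0$. Applying \eqref{eq:robust_bellman_upper} at the point $x_k$, and bounding the maximum over modes from below by the particular mode $\sigma(k)$, gives the one-step inequality
\[
V(x_k)\ge c_k + V\bigl(f_{\sigma(k)}(x_k,\phi(x_k))\bigr) = c_k + V(x_{k+1}).
\]
Summing (telescoping) these inequalities for $k=0,\dots,N-1$ then yields, for every horizon $N\in\Ne$,
\[
V(x)\ge \sum_{k=0}^{N-1} c_k + V(x_N)\ge \sum_{k=0}^{N-1} c_k ,
\]
where the last step uses $V\ge 0$. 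This nonnegativity is exactly where the assumption that $V$ maps into $\Re_{\ge0}$ enters.

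It remains to pass to the limit $N\to\infty$. Because every $c_k\ge0$, the partial sums are nondecreasing and bounded above by the finite number $V(x)$. Hence the series converges, and its limit $J^\phi_\sigma(x)$ satisfies $J^\phi_\sigma(x)\le V(x)$; in particular the divergent case $J^\phi_\sigma(x)=\infty$ cannot occur. The only point requiring care is this limiting step, namely making sure the convention $J^\phi_\sigma=\infty$ for divergent series does not break the bound. Monotone convergence of nonnegative partial sums settles it, so I expect no real obstacle. Finally, taking $\sup_\sigma$ concludes the proof.
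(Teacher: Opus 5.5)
Your proposal is correct and follows essentially the same route as the paper: fix $x$ and $\sigma$, use the Bellman inequality with the particular mode $\sigma(k)$ to get $V(x_k)\ge c(x_k,\phi(x_k))+V(x_{k+1})$, telescope, drop $V(x_N)\ge 0$, let $N\to\infty$, and take the supremum over $\sigma$. Your explicit monotone-convergence remark, which rules out $J^\phi_\sigma(x)=\infty$, is a small added detail that the paper leaves implicit.
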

\begin{proof}
Let $x\in\Re^n$ and $\sigma:\Ne\to\Mset$.
For each $k\in\Ne$, denote $x_k=\xi^\phi(k,x,\sigma)$.
Note that by~\eqref{eq:robust_bellman_upper}, it holds that for all $k\in\Ne$, $V(x_k)\geq c(x_k,\phi(x_k))+V(x_{k+1})$.
Hence, for all $H\in\Ne_{>0}$,
\begin{align*}
V(x)&\geq V(x)-V(x_H)= \sum_{k=0}^{H-1} V(x_k)-V(x_{k+1}) \geq \sum_{k=0}^{H-1} c(x_k,\phi(x_k)).
\end{align*}
Taking the limit when $H\to\infty$, we get that $V(x)\geq J^\phi_\sigma(x)$.
Then, taking the supremum over $\sigma$, we obtain that $V(x)\geq J^\phi(x)$, concluding the proof. 
\end{proof}

Proposition~\ref{prop:bellman-upper} will be central to our approach for constructing guaranteed upper bounds on the value function using path-complete graphs (introduced below). In particular, we replace the robust Bellman inequality~\eqref{eq:robust_bellman_upper} by a graph-based set of simpler `Bellman-type' inequalities, yielding approximations of the value function expressed as pointwise min--max combinations of functions. 

\subsection{Path-complete graphs}\label{subsection: path-complete graphs}
The path-complete Lyapunov framework, introduced in~\cite{ahmadi2014joint}, extends classical Lyapunov methods for the stability analysis of discrete-time switched systems by replacing a single Lyapunov function with multiple functions coupled through graph-based inequalities. 
The underlying graph, called a \emph{path-complete graph}, is constructed so that every admissible mode sequence corresponds to a path in the graph. 
We next recall the definitions required in the remainder of the paper.\\\\
We consider a \emph{directed labeled graph} $\calG = (\calS, \calE)$, where $\calS$ is a finite set of nodes and $\calE \subseteq \calS \times \calS\times\Mset$ is the set of edges labeled by elements of $\Mset$.
Each edge $(\alpha, \beta, i)\in\calE$ represents a possible transition from node $\alpha$ to node $\beta$ under mode $i$ of the switched system.

\begin{definition}[Path-complete graph]\leavevmode\\
Let $\calG = (\calS,\calE)$ be a directed labeled graph.
We say that $\calG$ is \emph{path-complete} (for $\Mset$) if for any $\ell\in\Ne_{>0}$ and any sequence $\sigma = (i_1,\ldots,i_\ell)\in\Mset^\ell$, there exists a path 
$\{(\alpha_k,\alpha_{k+1}, i_k)\}_{k=1}^\ell\subseteq \calE$.
\label{def:pc_graph}
\end{definition}

An example of path-complete graph is shown in Figure~\ref{fig:pc-graph}. The graph in Figure~\ref{fig:nonpc-graph} is not path-complete (for example, it cannot generate the sequence `$22$').
\begin{figure}[ht]
\centering

\subfloat[]{%
\begin{tikzpicture}[>=Stealth, scale=1, transform shape,
    node distance=2.3cm,
    vertex/.style={circle, draw, minimum size=0.7cm, font=\small,fill=gray!20}]
    
    \node[vertex] (V1) {$\alpha$};
    \node[vertex, right of=V1] (V2) {$\beta$};
    \path[->] (V1) edge[loop above] node{$1$} (V1);
    \path[->] (V2) edge[loop above] node{$2$} (V2);
    \path[->] (V1) edge[bend left=20] node[above]{$1$} (V2);
    \path[->] (V2) edge[bend left=20] node[below]{$2$} (V1);
\end{tikzpicture} \label{fig:pc-graph}
}
\hspace{1.3cm}
\subfloat[]{%
\begin{tikzpicture}[>=Stealth, scale=1, transform shape,
    node distance=2.3cm,
    vertex/.style={circle, draw, minimum size=0.7cm, font=\small, fill=gray!20}]
    
    \node[vertex] (V1) {$\alpha$};
    \node[vertex, right of=V1] (V2) {$\beta$};
    \path[->] (V1) edge[loop above] node{$1$} (U1);
    \path[->] (V1) edge[bend left=20] node[above]{$1$} (V2);
    \path[->] (V2) edge[bend left=20] node[below]{$2$} (V1);
\end{tikzpicture}  \label{fig:nonpc-graph}}

\caption{(a) Path-complete graph with two nodes, for a system with two switching modes. (b) Graph not path-complete.}
\label{fig:pc-vs-copc}
\end{figure}

Below, we define two special classes of path-complete graphs:

\begin{definition}[Complete and co-complete graphs]
A directed labeled graph $\calG = (\calS, \calE)$ is \emph{complete} (for~$\Mset$) if for each node $\alpha\in\calS$ and each label $i\in\Mset$, there exists at least one node $\beta\in\calS$ such that $(\alpha, \beta, i)\in\calE$. Conversely, $\calG$ is \emph{co-complete} if for each node $\beta\in\calS$ and each label $i\in\left <M \right >$, there exists at least one node $\alpha\in\calS$ such that $(\alpha, \beta, i)\in\calE$.\label{definition:complete_cocomplete}
\end{definition}
The graph in Figure~\ref{fig:pc-graph} is co-complete, whereas the graph in Figure~\ref{fig:pathcomplete_graph} is path-complete but neither complete nor co-complete.
\begin{figure}[ht]
\centering

\begin{tikzpicture}[>=Stealth, scale=1, transform shape,
    node distance=2.3cm,
    vertex/.style={circle, draw, minimum size=0.7cm, font=\small,fill=gray!20}]
    
    \node[vertex] (V1) {$\alpha$};
    \node[vertex, right of=V1] (V2) {$\beta$};
    \node[vertex, right of=V2] (V3) {$\gamma$};
    \node[vertex, right of=V3] (V4) {$\delta$};
    \path[->] (V4) edge[loop right] node{$2$} (V4);
    \path[->] (V1) edge[bend left=20] node[above]{$1$} (V2);
    \path[->] (V2) edge[bend left=20] node[below]{$1$} (V1);
    \path[->] (V2) edge[bend left=0] node[below]{$1$} (V3);
    \path[->] (V3) edge[bend left=20] node[above]{$1$} (V4);
      \path[->] (V4) edge[bend left=20] node[below]{$2$} (V3);
    \path[->] (V4) edge[bend left=40] node[below]{$2$} (V1);
    \path[->] (V2) edge[bend left=40] node[above]{$1$} (V4);
\end{tikzpicture} 
\caption{Path-complete graph with four nodes, for a system with two switching modes.}
\label{fig:pathcomplete_graph}
\end{figure}
We next introduce the notion of a reachability graph, which will play a central role in the construction of policies and upper bounds on the value function in Section~\ref{section:ub_general_results}.
\begin{definition}[Reachability graph]
\label{def:reachability_graph}
Let $\calG=(\calS,\calE)$ be a path-complete graph for $\Mset$. 
A graph $\calH=(\calS_\calH,\calE_\calH)$, where $\calS_\calH\subseteq2^\calS$ and $\calE_\calH\subseteq\calS_\calH\times\calS_\calH\times\Mset$,\footnote{This means that for all $A\in\calS_\calH$, $A\subseteq\calS$.} is a \emph{reachability graph} of $\calG$ if (i) $\calH$ is complete, and (ii) for every $(A,B,i) \in \calE_\calH$,
\[
\forall \, \beta \in B,\ \exists \, \alpha \in A
\text{ such that }
(\alpha,\beta,i)\in \calE.
\]\vskip0pt%
\end{definition}
\begin{remark}
We note that any graph (with label set $\Mset$) that admits a reachability graph is path-complete (the proof, which is not difficult, is omitted).
Therefore, Definition~\ref{def:reachability_graph} restricts to path-complete graphs.
\end{remark}
Examples of reachability graphs are shown in Figure~\ref{fig:reachability_graph_examples}.
\begin{figure}[ht]
\centering
\begin{tikzpicture}[
>=Stealth,
vertex/.style={
circle,
draw,
minimum size=1.2cm,
text width=1.2cm,
align=center,
inner sep=0pt,
font=\small,
fill=gray!20
}
]

\begin{scope}[xshift=0cm]

\node at (4.2,2) {$\mathcal H_1$};

\node[vertex] (B) at (3,0) {$\alpha,\gamma,\delta$};
\node[vertex] (C) at (5.5,0) {$\beta,\delta$};

\path[->] (B) edge[loop above] node {$2$} ();
\path[->] (B) edge[bend left=30] node[above] {$1$} (C);
\path[->] (C) edge[bend left=30] node[below] {$2$} (B);
\path[->] (C) edge node[below] {$1$} (B);

\end{scope}

\begin{scope}[xshift=8.5cm]

\node at (2.5,2) {$\mathcal H_2$};

\node[vertex] (V1) at (0,0) {$\alpha,\beta,\gamma,\delta$};
\node[vertex] (V2) at (2.5,0) {$\alpha,\gamma,\delta$};
\node[vertex] (V3) at (5,0) {$\beta,\delta$};

\path[->] (V1) edge[loop above] node{$1$} ();
\path[->] (V2) edge[loop above] node{$2$} ();

\path[->] (V1) edge node[above]{$2$} (V2);
\path[->] (V3) edge node[below]{$1$} (V2);

\path[->] (V2) edge[bend left=30] node[above]{$1$} (V3);
\path[->] (V3) edge[bend left=30] node[below]{$2$} (V2);

\end{scope}

\end{tikzpicture}

\caption{Two examples of reachability graphs for the graph in Figure~\ref{fig:pathcomplete_graph}.}
\label{fig:reachability_graph_examples}
\end{figure}
\begin{remark}\label{rem:observer}
Reachability graphs can be viewed as a generalization of the subgraph $O^\star(\calG)$ introduced in~\cite{angeli2017}. Indeed, if $O^\star(\calG)$ denotes the unique strongly connected, deterministic, and complete subgraph of the observer graph of a path-complete graph $\calG$~\cite[Lemma~1]{angeli2017}, then $O^\star(\calG)$ is a reachability graph in the sense of Definition~\ref{def:reachability_graph}. For example, the graph $\calH_1$ in Figure~\ref{fig:reachability_graph_examples} corresponds to $O^\star(\calG)$ with $\calG$ the graph in Figure~\ref{fig:pathcomplete_graph}, and the graph $\calH_2$ is the observer graph. 
\end{remark}

\section{Deriving path-complete control policies and upper bounds on the value function}\label{section:ub_general_results}
In this section, we show how path-complete graphs can be used to jointly construct a feedback control policy and a certified upper bound on the value function of the corresponding closed-loop system. To this end, we introduce two families of decision variables: functions $\{V_\alpha\}_{\alpha\in\calS}$ associated with the nodes of a path-complete graph, and functions $\{\pi_A\}_{A\in\calS_\calH}$ associated with the nodes of one of its reachability graphs (see Definition~\ref{def:reachability_graph}). By enforcing graph-based Bellman inequalities on these functions, we show that they can be combined into an explicit feedback law $\phi$ and a certified upper bound $V$ on $J^\phi$.\\\\
More precisely, given a path-complete graph $\calG=(\calS,\calE)$, let $\calH=(\calS_\calH,\calE_\calH)$ be a reachability graph, and let $\calT\subseteq\calF^n_{\ge0}$ be a template of functions. For system~\eqref{eq:switched_general} and a cost function
$c:\Re^n\times\Re^m\to\Re_{\ge0}$, we seek functions
$\{V_\alpha\}_{\alpha\in\calS}\subseteq\calT$
and $\{\pi_A\}_{A\in\calS_\calH}\subseteq(\Re^m)^{\Re^n}$
satisfying the following graph-based Bellman inequalities:
\begin{align}
&\hspace{-0.4cm}V_\alpha(x)
\ge
c(x,\pi_A(x))
+
V_\beta(f_i(x,\pi_A(x))), \quad \forall\,x\in\Re^n,\;
\forall\,A\in\calS_\calH,\;
\forall\,(\alpha,\beta,i)\in\calE
\text{ with }\alpha\in A.
\label{eq:inequalities_general_case}
\end{align}

Given a solution to~\eqref{eq:inequalities_general_case}, we define the feedback policy
\begin{equation}
\phi(x)\coloneqq \pi_{A^\star(x)}(x), \quad
A^\star(x)\in
\argmin_{A\in\calS_\calH}
\max_{\alpha\in A}V_\alpha(x).
\label{eq:def_policy}
\end{equation}

The theorem below shows that the graph-based Bellman inequalities~\eqref{eq:inequalities_general_case} yield an explicit certified upper bound on the closed-loop value function of the policy~\eqref{eq:def_policy}. The result builds on ideas from~\cite[Theorem~1]{angeli2017} and~\cite[Proposition~10]{dellarossa2024graph}. The former shows how a set of inequalities similar to \eqref{eq:inequalities_general_case} (but without the controllers and the cost functions) provides a common Lyapunov function for autonomous switched systems, by using the observer graph (see Remark~\ref{rem:observer}) as a reachability graph.
The second extends this approach to switched systems with inputs, but restricts to complete graphs and less rich controller structures than in \eqref{eq:def_policy}.
Our theorem is the first to simultaneously (i) provide certified upper bounds on the value function (rather than stability guarantees), (ii) apply to systems with inputs, and (iii) allow for general path-complete graphs and rich controller structures as in \eqref{eq:def_policy}, enabled by the notion of reachability graph.

\begin{theorem}\label{theorem:upper-bound-general}
Consider system~\eqref{eq:switched_general}, a cost function $c:\Re^n\times\Re^m \to\Re_{\geq0}$, a directed labeled graph $\calG=(\calS,\calE)$ and a template $\calT\subseteq\calF^n_{\geq0}$. Assume that $\calG$ is path-complete for~$\Mset$ and let $\calH=(\calS_\calH, \calE_\calH)$ be a reachability graph of $\calG$. Assume furthermore that $\{V_\alpha\}_{\alpha\in\calS}\subseteq\allowbreak\calT$, $\{\pi_A\}_{A \in \calS_\calH} \subseteq (\Re^m)^{\Re^n}$ satisfy~\eqref{eq:inequalities_general_case}. Let $V:\Re^n\to\Re_{\geq0}$ be defined by
\begin{equation}
V(x) \coloneqq \min_{A\in \calS_\calH} \max_{\alpha\in A} V_\alpha(x),
\label{eq:def_value_function_general}
\end{equation}
and let $\phi:\Re^n\to\Re^m$ be defined as in~\eqref{eq:def_policy}. 
Then, for all $x\in \Re^n$, $V(x)\geq J^\phi(x)\geq J^\star (x)$, where $J^\star $ is the optimal value function of system~\eqref{eq:switched_general} with cost $c$, and $J^\phi$ is the value function of the closed-loop system with policy $\phi$.
\end{theorem}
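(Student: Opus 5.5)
The plan is to show that $V$ defined in \eqref{eq:def_value_function_general} satisfies the robust Bellman inequality \eqref{eq:robust_bellman_upper} for the policy $\phi$ of \eqref{eq:def_policy}. Proposition~\ref{prop:bellman-upper} then gives $V\ge J^\phi$. The second inequality $J^\phi\ge J^\star$ is immediate from Definition~\ref{def:optimal-value}, since $J^\star$ is an infimum over policies. Nonnegativity of $V$ is clear because every $V_\alpha$ lies in $\calT\subseteq\calF^n_{\ge0}$, and the minima and maxima are over finite nonempty sets.

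Fix $x\in\Re^n$ and let $A=A^\star(x)$, so that $V(x)=\max_{\alpha\in A}V_\alpha(x)$ and $\phi(x)=\pi_A(x)$. Fix a mode $i\in\Mset$ and set $y=f_i(x,\pi_A(x))$. We must show
\[
V(x)\ge c(x,\pi_A(x))+V(y).
\]
Since $\calH$ is complete (condition (i) of Definition~\ref{def:reachability_graph}), there is a node $B\in\calS_\calH$ with $(A,B,i)\in\calE_\calH$. This gives $V(y)\le\max_{\beta\in B}V_\beta(y)$, so it suffices to bound $V_\beta(y)$ for each $\beta\in B$.

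Take any $\beta\in B$. Condition (ii) of Definition~\ref{def:reachability_graph} provides some $\alpha\in A$ with $(\alpha,\beta,i)\in\calE$. Because $\alpha\in A$, inequality \eqref{eq:inequalities_general_case} applies with controller $\pi_A$, and it yields
\[
V_\beta(y)\le V_\alpha(x)-c(x,\pi_A(x))\le \max_{\alpha'\in A}V_{\alpha'}(x)-c(x,\pi_A(x))=V(x)-c(x,\pi_A(x)).
\]
Taking the maximum over $\beta\in B$ and then the maximum over $i\in\Mset$ gives \eqref{eq:robust_bellman_upper}.

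The main subtlety is that one edge inequality must be used with the controller $\pi_A$ attached to the minimizing set $A$, and not with some controller attached to $\alpha$. This is exactly why \eqref{eq:inequalities_general_case} is imposed for every pair $(A,\alpha)$ with $\alpha\in A$. A second point to check is that the successor bound needs only some $B$ reached via $\calE_\calH$, not the minimizer at $y$; the minimum in the definition of $V$ absorbs this. One should also note that when $V(x)=\infty$ there is nothing to prove, although all values are finite here anyway.
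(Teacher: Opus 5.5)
Your proof is correct and follows essentially the same route as the paper. Both verify the robust Bellman inequality at each $x$ via $A=A^\star(x)$, a successor $B$ supplied by completeness of $\calH$, and condition (ii) to find a predecessor $\alpha\in A$, then invoke Proposition~\ref{prop:bellman-upper}; the only cosmetic difference is that you bound $V_\beta(y)$ for every $\beta\in B$, whereas the paper applies the edge inequality only to a maximizer $\beta^\star$.
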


\begin{proof}
    Take any $x\in \Re^n$ and let $A=A^\star(x)$, with $A^\star(x)$ defined in~\eqref{eq:def_policy}. Then, by~\eqref{eq:def_policy} and~\eqref{eq:def_value_function_general},
     \begin{equation}
     \phi(x)=\pi_A(x),
     \qquad
     V(x)=\max_{\alpha \in A} V_\alpha(x). \label{eq:proof_thm}
     \end{equation}
    We show that~\eqref{eq:robust_bellman_upper} is satisfied at $x$. To this end, let $i\in \Mset$. Take any $B\in \calS_\calH$ such that $(A,B,i)\in \calE_\calH$ (such a node $B$ always exists since the reachability graph $\calH$ is complete). By Definition~\ref{def:reachability_graph}, we know that for every $\beta \in B$, there exists $\alpha \in A$ such that $(\alpha, \beta,i)\in \calE$. In particular, for
    \[
    \beta^\star \in \argmax_{\beta \in B} V_\beta(f_i(x,\pi_A(x))),
    \]
    there exists $\alpha^\star \in A$ such that $(\alpha^\star,\beta^\star,i)\in \calE$. Applying~\eqref{eq:inequalities_general_case} to $x$, $A$, and $(\alpha^\star,\beta^\star,i)$, we get
    \[
    V_{\alpha^\star}(x)
    \ge
    c(x,\pi_A(x))
    +
    \max_{\beta \in B} V_\beta(f_i(x,\pi_A(x))).
    \]
    Recalling~\eqref{eq:proof_thm}, we have
    \[
    V(x)\geq c(x,\phi(x)) + V(f_i(x,\phi(x))).
    \]
    Since this holds for every $i\in\Mset$, taking the maximum over $i$ yields
    \[
    V(x)\geq c(x,\phi(x)) + \max_{i\in\Mset}  V(f_i(x,\phi(x))).
    \]
    Since $x$ was arbitrary, we can use Proposition~\ref{prop:bellman-upper} to conclude that $V(x)\ge J^\phi(x)$ for all $x\in \Re^n$. The inequality $J^\phi(x)\ge J^\star(x)$ for all $x\in \Re^n$ follows directly from Definition~\ref{definition:optimal_value_function}, completing the proof.
\end{proof}

The next two corollaries provide specializations of Theorem~\ref{theorem:upper-bound-general}
for complete and co-complete graphs (which are particular classes of path-complete graphs; see Definition~\ref{definition:complete_cocomplete}). In these specializations, the
upper bound simplifies to a pointwise minimum or maximum of the
functions $V_\alpha$, without explicitly involving a reachability graph. These corollaries also generalize~\cite[Theorem~1]{ninite2026pathcomplete} to systems with inputs.

\begin{corollary}
\label{corollary:complete_case}
Consider system~\eqref{eq:switched_general}, a cost function
$c:\Re^n\times\Re^m\to\Re_{\ge0}$, a \emph{complete} graph
$\calG=(\calS,\calE)$, and a template
$\calT\subseteq\calF^n_{\ge0}$. Assume that $\{V_\alpha\}_{\alpha\in\calS}\subseteq\calT$ and
$\{\pi_{\alpha}\}_{\alpha\in\calS}
\subseteq(\Re^m)^{\Re^n}$ satisfy
\begin{equation}
V_\alpha(x)
\ge
c(x,\pi_{\alpha}(x))
+
V_\beta(
f_i(x,\pi_{\alpha}(x))
),\label{eq:corollary_complete}
\end{equation}
for all $x\in\Re^n$ and all $(\alpha,\beta,i)\in\calE$.
Let $V:\Re^n\to\Re_{\geq0}$ be defined by
\begin{equation}
V(x)\coloneqq \min_{\alpha\in\calS}V_\alpha(x), \label{eq:ub_complete}
\end{equation}
and let $\phi:\Re^n\to\Re^m$ be defined by
\begin{equation}
\phi(x)\coloneqq \pi_{\kappa(x)}(x), \qquad \kappa(x)\in\argmin_{\alpha\in\calS}V_\alpha(x). \label{eq:policy_complete}
\end{equation}
Then, for all $x\in\Re^n$, $V(x)\ge J^\phi(x)\ge J^\star(x)$.
\end{corollary}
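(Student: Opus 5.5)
The plan is to obtain the corollary as a direct specialization of Theorem~\ref{theorem:upper-bound-general}, by exhibiting a suitable reachability graph built from singletons. Since $\calG$ is complete, it is in particular path-complete: given any sequence $(i_1,\dots,i_\ell)$, we can start at an arbitrary node and repeatedly follow an outgoing edge with the required label, which completeness guarantees. So the theorem applies once a reachability graph is chosen.

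Define $\calH=(\calS_\calH,\calE_\calH)$ by $\calS_\calH\coloneqq\{\{\alpha\}:\alpha\in\calS\}$ and $\calE_\calH\coloneqq\{(\{\alpha\},\{\beta\},i):(\alpha,\beta,i)\in\calE\}$. We then check the two conditions of Definition~\ref{def:reachability_graph}. For (i), $\calH$ is complete because, for each node $\{\alpha\}$ and label $i$, completeness of $\calG$ gives some $\beta$ with $(\alpha,\beta,i)\in\calE$, hence $(\{\alpha\},\{\beta\},i)\in\calE_\calH$. For (ii), given an edge $(\{\alpha\},\{\beta\},i)\in\calE_\calH$, the unique element $\beta$ of $B=\{\beta\}$ is reached from $\alpha\in A=\{\alpha\}$ via $(\alpha,\beta,i)\in\calE$ by construction.

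Next, set $\pi_{\{\alpha\}}\coloneqq\pi_\alpha$ and check that \eqref{eq:inequalities_general_case} reduces to \eqref{eq:corollary_complete}. With $A=\{\alpha'\}$, the condition $\alpha\in A$ forces $\alpha=\alpha'$, so the required inequalities are exactly $V_\alpha(x)\ge c(x,\pi_\alpha(x))+V_\beta(f_i(x,\pi_\alpha(x)))$ for all $(\alpha,\beta,i)\in\calE$, which is the hypothesis. Finally, $\max_{\alpha'\in\{\alpha\}}V_{\alpha'}=V_\alpha$, so \eqref{eq:def_value_function_general} becomes \eqref{eq:ub_complete}. Likewise, the argmin in \eqref{eq:def_policy} selects $\{\kappa(x)\}$ with $\kappa(x)\in\argmin_\alpha V_\alpha(x)$, so \eqref{eq:def_policy} coincides with \eqref{eq:policy_complete}; any tie-breaking choice in $\kappa$ corresponds to a valid choice of $A^\star(x)$. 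Theorem~\ref{theorem:upper-bound-general} then yields $V\ge J^\phi\ge J^\star$.

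There is no real obstacle here. The only points needing care are confirming completeness of $\calH$ and checking that the arbitrary tie-breaking in \eqref{eq:policy_complete} matches a legitimate selection in \eqref{eq:def_policy}.
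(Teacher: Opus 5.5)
Your proposal is correct and matches the paper's proof: it uses the same singleton reachability graph $\calH$ with $\calS_\calH=\{\{\alpha\}:\alpha\in\calS\}$ and the same assignment $\pi_{\{\alpha\}}=\pi_\alpha$, reduces \eqref{eq:def_value_function_general} and \eqref{eq:def_policy} to \eqref{eq:ub_complete} and \eqref{eq:policy_complete}, and then applies Theorem~\ref{theorem:upper-bound-general}. Your explicit checks that a complete graph is path-complete and that any tie-breaking choice of $\kappa(x)$ gives a valid $A^\star(x)$ are details the paper leaves implicit.
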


\begin{proof}
Let $\{V_\alpha\}_{\alpha\in\calS}$ and
$\{\pi_\alpha\}_{\alpha\in\calS}$ satisfy~\eqref{eq:corollary_complete}.

Since $\calG$ is complete, the graph $\calH=(\calS_{\calH},\calE_{\calH})$ with nodes $\calS_{\calH}\coloneqq\{\{\alpha\}:\alpha\in\calS\}$ and edges $\calE_{\calH}$ defined by
\[
(\{\alpha\},\{\beta\},i)\in\calE_{\calH}
\iff
(\alpha,\beta,i)\in\calE,
\]
is a reachability graph of $\calG$.
For each $A=\{\alpha\}\in\calS_{\calH}$, define
\[
\tilde\pi_A\coloneqq\pi_\alpha.
\]
Then, by~\eqref{eq:corollary_complete},  $\{V_\alpha\}_{\alpha\in\calS}$, $\{\tilde \pi_A\}_{A \in \calS_\calH}$ satisfy~\eqref{eq:inequalities_general_case}. Hence all assumptions of
Theorem~\ref{theorem:upper-bound-general} are satisfied.

Since every node of $\calH$ is a singleton, the function defined in
\eqref{eq:def_value_function_general} reduces to~\eqref{eq:ub_complete} and the policy~\eqref{eq:def_policy} reduces to~\eqref{eq:policy_complete}.
Applying Theorem~\ref{theorem:upper-bound-general} yields the result.
\end{proof}

\begin{corollary}
\label{corollary:cocomplete_case}
Consider system~\eqref{eq:switched_general}, a cost function
$c:\Re^n\times\Re^m\to\Re_{\ge0}$, a \emph{co-complete} graph
$\calG=(\calS,\calE)$, and a template
$\calT\subseteq\calF^n_{\ge0}$. Assume that
$\{V_\alpha\}_{\alpha\in\calS}\subseteq\calT$ and
$\phi:\Re^n \to \Re^m$ satisfy
\begin{equation}
V_\alpha(x)
\ge
c(x,\phi(x))
+
V_\beta\!\left(
f_i(x,\phi(x))
\right), \label{eq:ineq_corollary_cocomplete}
\end{equation}
for all $x\in\Re^n$ and all $(\alpha,\beta,i)\in\calE$.
Let $V:\Re^n\to\Re_{\geq0}$ be defined by
\begin{equation}
V(x)\coloneqq \max_{\alpha\in\calS}V_\alpha(x).\label{eq:ub_cocomplete}
\end{equation}
Then, for all $x\in\Re^n$, $V(x)\ge J^\phi(x)\ge J^\star(x)$.
\end{corollary}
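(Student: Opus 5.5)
We will derive Corollary~\ref{corollary:cocomplete_case} from Theorem~\ref{theorem:upper-bound-general} by exhibiting a reachability graph with a single node, namely the whole node set $\calS$. Concretely, define $\calH=(\calS_\calH,\calE_\calH)$ with $\calS_\calH\coloneqq\{\calS\}$ and $\calE_\calH\coloneqq\{(\calS,\calS,i): i\in\Mset\}$. Condition (i) of Definition~\ref{def:reachability_graph} holds trivially, since the unique node has an outgoing edge for every label $i$. For condition (ii), we must check that for every edge $(\calS,\calS,i)$ and every $\beta\in\calS$ there is $\alpha\in\calS$ with $(\alpha,\beta,i)\in\calE$; this is exactly the co-completeness of $\calG$ (Definition~\ref{definition:complete_cocomplete}). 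Hence $\calH$ is a reachability graph of $\calG$, and by the Remark following Definition~\ref{def:reachability_graph} $\calG$ is in particular path-complete, so the hypotheses of Theorem~\ref{theorem:upper-bound-general} on the graph are met.

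Next we set the single controller $\pi_{\calS}\coloneqq\phi$. The graph-based Bellman inequalities~\eqref{eq:inequalities_general_case} then read: for all $x$, for $A=\calS$ and all $(\alpha,\beta,i)\in\calE$ with $\alpha\in\calS$ (i.e.\ all edges), $V_\alpha(x)\ge c(x,\phi(x))+V_\beta(f_i(x,\phi(x)))$. This is precisely the assumption~\eqref{eq:ineq_corollary_cocomplete}. So $\{V_\alpha\}_{\alpha\in\calS}$ and $\{\pi_A\}_{A\in\calS_\calH}$ satisfy~\eqref{eq:inequalities_general_case}.

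Finally we identify the objects produced by the theorem. Since $\calS_\calH$ has one element, the minimum in~\eqref{eq:def_value_function_general} is trivial and the bound becomes $\max_{\alpha\in\calS}V_\alpha(x)$, which is~\eqref{eq:ub_cocomplete}; likewise $A^\star(x)=\calS$ for all $x$ in~\eqref{eq:def_policy}, so the policy built by the theorem is $\pi_\calS=\phi$. Applying Theorem~\ref{theorem:upper-bound-general} gives $V(x)\ge J^\phi(x)\ge J^\star(x)$ for all $x$. There is no real obstacle; the only point needing care is matching condition (ii) of the reachability graph to co-completeness (with the quantifiers in the right order: for each $\beta$, exists $\alpha$), and noting that the policy defined via~\eqref{eq:def_policy} coincides with the given $\phi$ so that the certified function is indeed $J^\phi$ for this $\phi$.
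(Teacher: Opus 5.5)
Your proposal is correct and follows the paper's proof essentially step for step. Like the paper, you take the single-node reachability graph with $\calS_\calH=\{\calS\}$, reduce condition (ii) to co-completeness, set $\pi_\calS\coloneqq\phi$, and note that the min--max bound and policy collapse to $\max_{\alpha}V_\alpha$ and $\phi$ before applying Theorem~\ref{theorem:upper-bound-general}. Your explicit appeal to the Remark for path-completeness of $\calG$ is a harmless detail that the paper leaves implicit.
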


\begin{proof}
Let $\{V_\alpha\}_{\alpha\in\calS}$ and $\phi$ satisfy~\eqref{eq:ineq_corollary_cocomplete}.

Since $\calG$ is co-complete, the graph $\calH=(\calS_{\calH},\calE_{\calH})$ with $\calS_\calH=\{\calS\}$ and $\calE_\calH=\calS_\calH\times\calS_\calH\times\Mset$ is a reachability graph of $\calG$.
Associate with its unique node the policy
\[
\pi_{\calS}\coloneqq\phi.
\]
Then, by~\eqref{eq:ineq_corollary_cocomplete},  $\{V_\alpha\}_{\alpha\in\calS}$, $\{\pi_A\}_{A \in \calS_\calH}$ satisfy~\eqref{eq:inequalities_general_case}.
Hence all assumptions of
Theorem~\ref{theorem:upper-bound-general} are satisfied.

Since $\calS_\calH=\{\calS\}$, the function defined in
\eqref{eq:def_value_function_general} reduces to~\eqref{eq:ub_cocomplete} and the policy~\eqref{eq:def_policy} reduces to $\phi$.
Applying Theorem~\ref{theorem:upper-bound-general} yields the result.
\end{proof}

\section{Matrix inequalities in the LQR setting}\label{sec:tractable}

The conditions in \eqref{eq:inequalities_general_case} remain difficult to solve in general because they are \emph{functional} inequalities which implies that they need to be verified or enforced at \emph{every} state $x$.
Several approaches allow to do that (possibly conservatively), including sum-of-square programming~\cite{Parrilo2003} (for inequalities involving polynomials), counterexample-guided methods \cite{chang2019advances}, or sensitivity analysis \cite{Baier2019numerical}.
In this section, we focus on switched systems with linear dynamics, quadratic cost functions, and piecewise-linear policies, for which we can express unconservatively the functional inequalities as \emph{matrix inequalities}.\\\\
More precisely, we focus on switched systems of the form:
\begin{equation}
x_{k+1}=A_{\sigma(k)}x_k+B_{\sigma(k)}u_k, \quad k\in\Ne,
\label{eq:switched_Ax_Bu_controlled}
\end{equation}
wherein $A_i\in\Re^{n\times n}$ and $B_i\in\Re^{n\times m}$ for every $i\in\Mset$. Moreover, we seek local policies $\pi_A$ parameterized as \emph{linear} state-feedback laws:
\begin{equation}
\pi_A(x)=K_Ax,\qquad
K_A\in\Re^{m\times n},
\quad
\forall\,A\in\calS_\calH.
\label{eq:pwl_policy_general}
\end{equation}
The resulting policy~\eqref{eq:def_policy} is therefore piecewise linear.
We further consider a quadratic cost function
\begin{equation}
c(x,u)=x^\top Qx+u^\top Ru,
\label{eq:quadratic_cost_controlled}
\end{equation}
where $Q\in\Re^{n\times n}_{\succ0}$ and $R\in\Re^{m\times m}_{\succ0}$.
Finally, we seek local upper bounds $\{V_\alpha\}_{\alpha\in\calS}$ in the template of quadratic functions; namely, parameterized by positive semidefinite matrices
$\{P_\alpha\}_{\alpha\in\calS}$:
\begin{equation}
V_\alpha(x)=x^\top P_\alpha x, \quad
P_\alpha \in \Re^{n\times n}_{\succeq 0},
\quad \forall \, \alpha\in\calS.
\label{eq:quadratic_template_controlled}
\end{equation}
In the rest of this section, the above structure is assumed.
We explain how the functional inequalities \eqref{eq:inequalities_general_case} can be formulated as \emph{polynomial matrix inequalities} (PMIs) for general path-complete graphs and solved using alternating optimization.
We also show how they can be formulated as \emph{linear matrix inequalities} (LMIs) in the specific case of complete graphs.

\subsection{PMI formulation and alternating optimization}\label{sec:alternating_opt}
We describe here an algorithmic procedure for computing a control policy
together with a certified upper bound on the corresponding closed-loop
value function using a general path-complete graph~$\calG$.

Under the assumptions of this section,~\eqref{eq:inequalities_general_case} reduces to the matrix
inequalities
\begin{equation}
P_\alpha
\succeq
Q
+
K_A^\top R K_A^{} + (A_i+B_iK_A)^\top
P_\beta
(A_i+B_iK_A), \quad \forall\,A\in\calS_\calH,\;
\forall\,(\alpha,\beta,i)\in\calE
\text{ with }\alpha\in A. \label{eq:BMI}
\end{equation}
Since \eqref{eq:BMI} is multilinear in the decision variables $P_\alpha$ and $K_A$, we resort to an alternating optimization procedure, described below.\\\\
\emph{Initialization.} To initialize the algorithm, we assume that a stabilizing policy is available. More precisely, we assume that a feasible solution $\bigl(\{K_A\}_{A\in\calS_\calH},\{P_\alpha\}_{\alpha\in\calS}\bigr)$ to \eqref{eq:BMI} is given. Practical procedures for constructing such an initial stabilizing policy are discussed in Appendix~\ref{app:stabilizing_policy}.

\emph{Loop.} Starting from this feasible solution, we iteratively optimize over the matrices $\{P_\alpha\}_{\alpha\in\calS}$ and the feedback gains $\{K_A\}_{A\in\calS_\calH}$ in alternation while keeping the other variables fixed. More precisely, we repeatedly alternate between the `P-step' and `K-step' described below.

\emph{P-step.}
Given a policy $\{K_A\}_{A\in\calS_\calH}$, the matrix inequalities \eqref{eq:BMI} become linear in the variables $\{P_\alpha\}_{\alpha\in\calS}$. We compute the matrices $\{P_\alpha\}_{\alpha\in\calS}$ by solving the semidefinite program
\begin{equation}
\label{eq:P_step}
\min_{\{P_\alpha\}_{\alpha\in\mathcal S}}
\;
\sum_{\alpha\in\mathcal S} \mathrm{trace}(P_\alpha) \quad \text{s.t.}\quad
\eqref{eq:BMI}.
\end{equation}
The objective function above reflects our goal of obtaining a small upper
bound on the closed-loop value function. We adopt the trace-minimization
criterion used in~\cite{ninite2026pathcomplete}, although other choices, such as Frobenius norm or log-det minimization,\footnote{Note that the convex reformulation for the log-det minimization requires a Schur complement step to obtain $P_\alpha^{-1}$.} are also possible~\cite{ninite2026pathcomplete}.

\emph{K-step.}
Given the matrices $\{P_\alpha\}_{\alpha\in\calS}$, we solve the following optimization problem: 
\begin{align}
\min_{\gamma \ge 0, \{K_A\}_{A\in\mathcal S_\mathcal H}}\quad & \gamma \\
\text{s.t.}\quad &
\gamma P_\alpha
-
Q
-
K_A^\top R K_A \succeq(A_i+B_iK_A)^\top \gamma P_\beta (A_i+B_iK_A),\!\!\!\! \\
&\hspace{3cm}\forall\,A\in\calS_\calH,\;\forall\,(\alpha,\beta,i)\in\calE
\text{ with }\alpha\in A.
\label{eq:K_step}
\end{align}
It is solved by bisection on $\gamma$.
For any fixed value of $\gamma$, a Schur complement reformulation of the
constraints (provided in Appendix~\ref{sec:SDP_formu}) yields a semidefinite program in the variables
$\{K_A\}_{A\in\mathcal S_\mathcal H}$.

The scalar $\gamma$ is introduced to reflect our overall objective of decreasing the certified upper bound on the value function. By minimizing $\gamma$, we seek the smallest scaling of the matrices $\{P_\alpha\}_{\alpha\in\mathcal S}$ for which the inequalities remain feasible.\\\\
The overall procedure is summarized in Algorithm~\ref{alg:alternating_optimization}.
\begin{algorithm}[ht]
\caption{Alternating optimization}
\label{alg:alternating_optimization}
\begin{algorithmic}[1]
\State Choose a path-complete graph $\calG=(\calS,\calE)$ and a reachability graph $\calH=(\calS_\calH,\calE_\calH)$ of $\calG$
\State Initialize
$\bigl(\{K_A^{(0)}\}_{A\in\mathcal S_\mathcal H},
\{P_\alpha^{(0)}\}_{\alpha\in\mathcal S}\bigr)$
with a feasible solution to \eqref{eq:BMI} (see Appendix~\ref{app:stabilizing_policy})

\State $k\gets 0$

\Repeat

    \State \textbf{P-step}

    \State Compute $\{P_\alpha^{(k+1)}\}_{\alpha\in\mathcal S}$
    by solving \eqref{eq:P_step} with $\{K_A\}_{A\in\mathcal S_\mathcal H}$ fixed at $\{K_A^{(k)}\}_{A\in\mathcal S_\mathcal H}$

    \Statex

    \State \textbf{K-step}

    \State Compute $\{K_A^{(k+1)}\}_{A\in\mathcal S_\mathcal H}$ by solving~\eqref{eq:K_step}
    with $\{P_\alpha\}_{\alpha\in\mathcal S}$ fixed at $\{P_\alpha^{(k+1)}\}_{\alpha\in\mathcal S}$

    \State $k\gets k+1$

\Until{convergence}

\State \Return
$\{K_A^{(k)}\}_{A\in\mathcal S_\mathcal H}$
and
$\{P_\alpha^{(k)}\}_{\alpha\in\mathcal S}$

\end{algorithmic}
\end{algorithm}

\subsection{LMI formulation in the complete case}\label{sec:LMIs}
In this section, we show that for \emph{complete} graphs (see Definition~\ref{definition:complete_cocomplete}), the PMIs~\eqref{eq:BMI} can be reformulated as LMIs. As a consequence, the synthesis of a policy together with a certified upper bound on the corresponding closed-loop value function can be carried out through a single semidefinite program, thereby avoiding the alternating optimization procedure described above.\\\\
As shown in Corollary~\ref{corollary:complete_case}, complete graphs
allow assigning a distinct policy $\pi_\alpha$ to each node $\alpha$ of the graph. In the
linear-feedback setting considered here, this corresponds to assigning a
matrix $K_\alpha\in\Re^{m\times n}$ to each node $\alpha\in\calS$. Under the assumptions of this section, the feedback policy~\eqref{eq:policy_complete} reduces to the piecewise-linear controller
\begin{equation}
\phi(x)=K_{\kappa(x)}x,
\qquad
\kappa(x)\in\argmin_{\alpha\in\calS} x^\top P_\alpha x.
\label{eq:PWL_policy}
\end{equation}
Moreover, inequalities~\eqref{eq:BMI} become
\begin{align}
P_\alpha \succeq Q + K_\alpha^\top R K_\alpha^{} + (A_i + B_i K_\alpha)^\top P_\beta (A_i + B_i K_\alpha), \qquad \forall \, (\alpha,\beta,i)\in\calE.
\label{eq:matrix_inequality_controlled}
\end{align}
These matrix inequalities can be expressed as the following LMIs, as shown in our preliminary work~\cite[Proposition~7]{ninite2026pathcomplete}:
\begin{align}
&\begin{bmatrix}
S_\alpha & S_\alpha A_i^\top + Y_\alpha^\top B_i^\top & S_\alpha & Y_\alpha^\top \\
A_i S_\alpha + B_i Y_\alpha & S_\beta & 0 & 0 \\
S_\alpha & 0 & Q^{-1} &0 \\
Y_\alpha & 0 & 0 & R^{-1}
\end{bmatrix} \succeq0, \qquad \forall\,(\alpha,\beta,i)\in\calE,
\label{eq:lmi_controlled}
\end{align}
where $S_\alpha=P_\alpha^{-1}$ and
$Y_\alpha=K_\alpha P_\alpha^{-1}$.

Consequently, the joint computation of a control policy and an upper bound on the corresponding closed-loop value function reduces to solving the following SDP:\footnote{An alternative objective is to minimize $\sum_{\alpha\in\calS}\mathrm{trace}(P_\alpha)$. This can be reformulated as an SDP by introducing variables $P_\alpha$ satisfying $\begin{bmatrix}
    P_\alpha & I\\
    I & S_\alpha 
\end{bmatrix}\succeq 0$.}
\begin{align}
\max_{\{S_\alpha,Y_\alpha\}_{\alpha\in\calS}}
\quad &
\sum_{\alpha\in\calS}\log \det (S_\alpha) \quad
\text{s.t.}\quad
\eqref{eq:lmi_controlled}.
\label{eq:SDP_complete}
\end{align}
Once a solution is obtained, the matrices
$P_\alpha=S_\alpha^{-1}$ and
$K_\alpha=Y_\alpha S_\alpha^{-1}$ define both the policy
\eqref{eq:PWL_policy} and the certified upper bound of
Corollary~\ref{corollary:complete_case}.
\section{Numerical experiments}\label{sec:numerical_exp}
All experiments are conducted under the assumptions of Section~\ref{sec:tractable} (linear dynamics, linear state-feedback laws $\pi_A(x)=K_Ax$, quadratic stage costs, and quadratic template for the functions $V_\alpha$) and performed in Julia using JuMP and MOSEK on a laptop equipped with an Apple M5 chip and 16GB of RAM. Unless stated otherwise, we take $Q=I$ and $R=I$. The code for reproducing the experiments can be found at
\url{https://github.com/lninite/PathCompleteControl.jl}.\\\\
We consider both complete and non-complete path-complete graphs in order to illustrate the two tractable approaches detailed in Section~\ref{sec:tractable}. To instantiate these two classes of graphs, we use primal and dual De Bruijn graphs~\cite{de1948combinatorial}.
These are two classical families of path-complete graphs that have been extensively employed in the analysis of switched systems and in the construction of path-complete Lyapunov functions; see, e.g., \cite{ahmadi2014joint,dellarossa2024graph,dellarossa2024multiple}. Their definition is recalled in Appendix~\ref{sec:DB}.

Primal De Bruijn graphs are complete, while their duals are co-complete. In the following, controllers and performance certificates associated with primal De Bruijn graphs are computed through the SDP~\eqref{eq:SDP_complete}, whereas those associated with dual De Bruijn graphs are computed using Algorithm~\ref{alg:alternating_optimization}.
\subsection{Synthetic example}
We consider the following system matrices:
\[
A_1=
\begin{bmatrix}
0 & 1\\
-1 & 0
\end{bmatrix},
\:
A_2=
\begin{bmatrix}
-0.1 & 0\\
0 & -0.95
\end{bmatrix},\:
B_1=B_2=
\begin{bmatrix}
1\\
0
\end{bmatrix}.
\]
Figure~\ref{fig:upperbounds_toy_example} illustrates the certified upper bounds obtained from primal and dual De Bruijn graphs of orders $\ell\in\{1,2,3,4\}$, evaluated along the unit circle $x=(\cos\theta,\sin\theta)^\top$.\footnote{Since the certified upper bounds are pointwise min--max combinations of quadratic functions, they are homogeneous of degree two. It is therefore sufficient to evaluate them on the unit circle, in dimension $2$.} For both graph families, increasing the graph order generally leads to smaller certified upper bounds, although this improvement is not uniform over the state space. The figure also highlights the diversity of certificates that can be obtained from different path-complete graph structures: depending on the state, either the primal or the dual De Bruijn graph may provide the smaller certified upper bound. This illustrates the interest of extending our previous work~\cite{ninite2026pathcomplete} (which was restricted to complete graphs) to general path-complete graphs.
\begin{figure}[ht]
\setlength{\fboxsep}{0pt}
\centering
    \includegraphics[trim={2.1cm 0.9cm 0.2cm 0.4cm},clip,width=0.65\columnwidth]{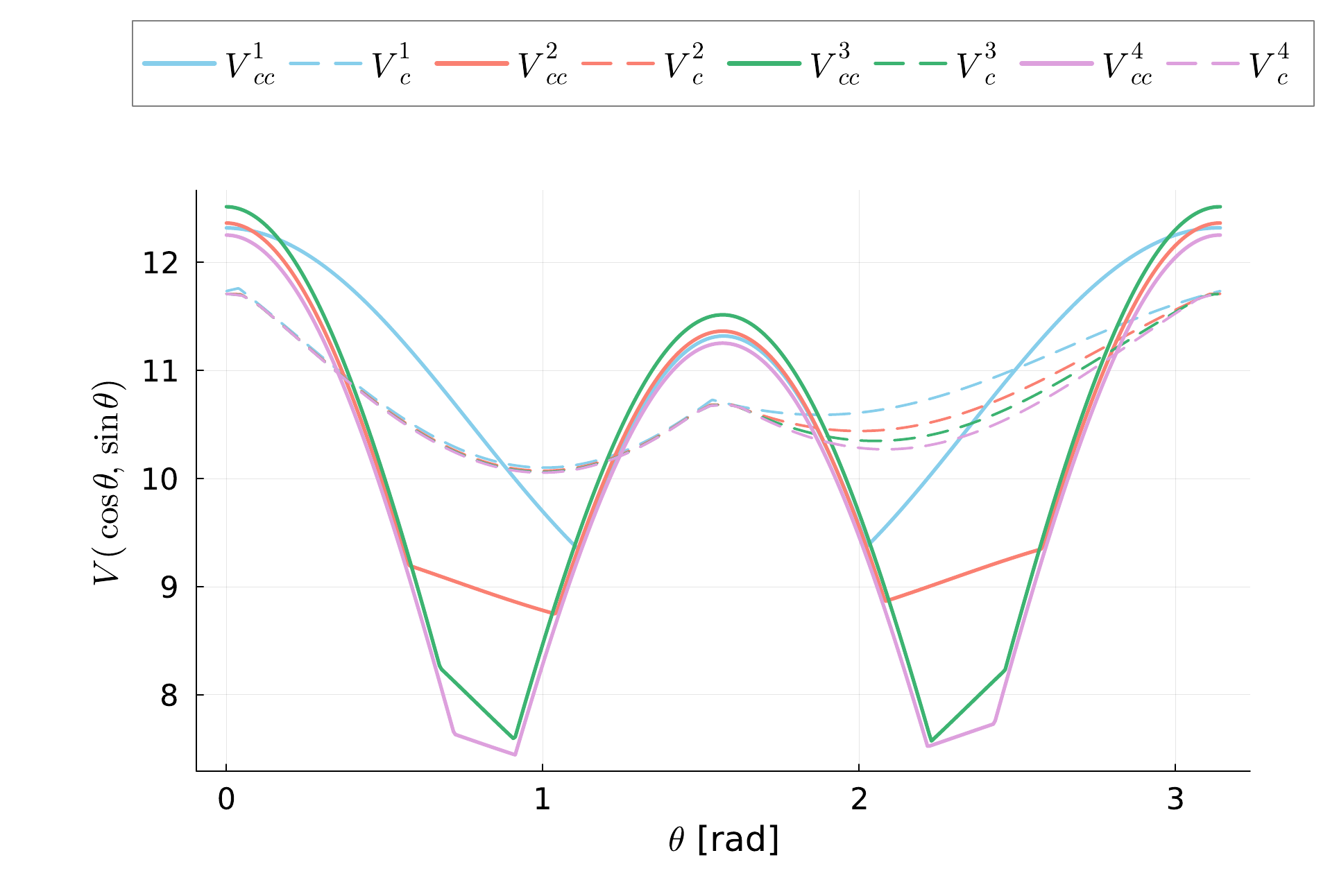}
\caption{Certified upper bounds $V_c^\ell$ and $V_{cc}^\ell$ on the value function, obtained from primal and dual De Bruijn graphs of order $\ell\in\{1,2,3,4\}$, respectively. The bounds $V_c^\ell$ are computed via the SDP~\eqref{eq:SDP_complete}, whereas $V_{cc}^\ell$ are computed using Algorithm~\ref{alg:alternating_optimization}. The bounds are evaluated along the unit circle $x=(\cos\theta,\sin\theta)^\top$.}
    \label{fig:upperbounds_toy_example}
\end{figure}
\subsection{Application to building temperature regulation}

Next, we illustrate the proposed approach on a building temperature regulation benchmark inspired by~\cite[Section VI-C]{wang2024datadriven}.

The system consists of three thermally coupled zones equipped with heating/cooling actuators. A schematic representation is shown in Figure~\ref{fig:building_layout}. The opening and closing of the two internal doors modify the thermal couplings between neighboring zones. Since each door can independently be either open or closed, this results in \(M=4\) switching modes.

The system is modeled as a discrete-time switched linear system of the form~\eqref{eq:switched_Ax_Bu_controlled}, where \(x_k\in\mathbb{R}^3\) denotes the deviation of the zone temperatures from the reference temperature \(T_{\mathrm{ref}}=24^\circ\mathrm{C}\), \(u_k\in\mathbb{R}^3\) contains the control input of each zone, and \(\sigma(k)\in\{1,2,3,4\}\) specifies the door configuration at time \(k\). The system matrices are derived from the RC model of~\cite{wang2024datadriven}, to which we refer for details on the physical modeling and parameter values.
\begin{figure}[H]
\setlength{\fboxsep}{0pt}
\centering
\includegraphics[trim={6mm 4mm 6mm 4mm},clip,width=0.5\linewidth]{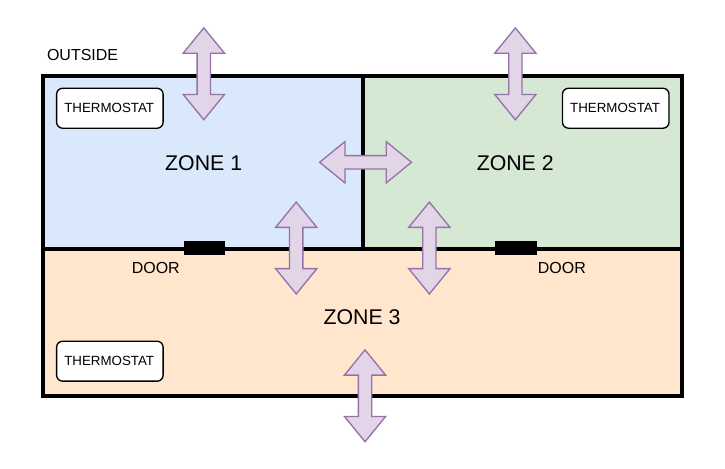}
\caption{Three-zone building temperature regulation benchmark.}
\label{fig:building_layout}
\end{figure}

\paragraph*{Considered controllers} We use this benchmark to compare four controller design approaches: (1) robust MPC without terminal cost; (2) robust MPC with terminal cost; (3) path-complete controllers based on primal De Bruijn graphs; and (4) path-complete controllers based on dual De Bruijn graphs. The objective is to compare the resulting closed-loop performance together with the online and offline computation times of the different approaches. The two robust MPC baselines are briefly described below.

For the robust MPC without terminal cost, a finite-horizon min--max optimal control problem is solved online at each time step in a receding-horizon fashion. Switching uncertainty is represented through a scenario tree containing all admissible mode sequences over the prediction horizon, and the optimization minimizes the worst-case quadratic cost across all scenarios. This leads to a total of $\approx \frac134^N$ decision variables, where $N$ is the prediction horizon.

We also investigate the use of our framework for the design of MPC terminal costs. In MPC, the finite-horizon objective only accounts for costs over the prediction window $\{0,\ldots,N-1\}$ and therefore does not, in general, guarantee \emph{recursive optimal cost decrease}, i.e., that the optimal value of the MPC optimization problem decreases from one time step to the next. If a terminal cost $V_f(x_N)$ is added to the objective and $V_f$ satisfies the robust Bellman inequality~\eqref{eq:robust_bellman_upper}, then recursive optimal cost decrease is guaranteed~\cite[Proposition~2.34]{Rawlings}. Since the value function certificates computed by our framework satisfy the robust Bellman inequality, they can naturally serve as terminal penalties for robust MPC.
In our experiment, we consider two choices: a terminal cost induced by the single-node graph, and a path-complete terminal cost induced by a first-order dual De Bruijn graph. For the single-node graph, the terminal cost is given by
$$
V_f(x_N)\coloneqq x_N^\top P x_N^{},
$$
where $P$ is computed by solving the SDP~\eqref{eq:SDP_complete}. For the dual De Bruijn graph, the terminal cost is given by
$$
V_f(x_N)\coloneqq \max_{\alpha\in\calS} x_N^\top P_\alpha x_N^{},
$$
where the matrices $\{P_\alpha\}_{\alpha\in\calS}$ are computed using Algorithm~\ref{alg:alternating_optimization}. In both cases, $N$ denotes the prediction horizon and the resulting terminal cost provides a certified upper bound on the infinite-horizon cost-to-go.

\paragraph*{Experimental setup} The control objective is to regulate the temperature of the three zones to the constant reference temperature \(T_{\mathrm{ref}} = 24^\circ\mathrm{C}\), which corresponds to driving the state to the origin. The simulations are initialized from zone temperatures \((29,19,29)^\top\,^\circ\mathrm{C}\), corresponding to the initial deviation
\(
x_0 = (5,-5,5)^\top.
\)

For the MPC experiments (with and without terminal costs), we consider prediction horizons $N\in\{2,3,4,5\}$. Larger horizons were not considered because the number of scenarios grows exponentially with $N$, yielding rapidly increasing online computational costs. For the path-complete approach, we consider both primal and dual De Bruijn graphs, with orders \(\ell\in\{1,\dots,4\}\). For primal De Bruijn graphs, the controller and its certified value function upper bound are obtained through the single SDP~\eqref{eq:SDP_complete}. For dual De Bruijn graphs, they are computed using the alternating optimization procedure of Algorithm~\ref{alg:alternating_optimization}.

For each controller, we simulate the closed-loop system, starting from $x_0$, over a horizon \(T=300\). Results are averaged over \(50\) Monte Carlo simulations. In each trial, a switching sequence \(\sigma(0),\ldots,\sigma(T-1)\) is generated by sampling the mode at each time step,  uniformly and independently from \(\{1,2,3,4\}\). The \emph{same set of sampled switching sequences} is used for all controllers to ensure a fair comparison.

\newcommand{\Jmc}{J^{\mathrm{avg}}}

\paragraph*{Performance evaluation} Table~\ref{tab:comparison} reports the average finite-horizon cumulative cost
\[
\Jmc_T=\frac1{50}\sum_{i=1}^{50}J_{i,T},\quad J_{i,T}=\sum_{k=0}^{T-1} x_{i,k}^\top Qx_{i,k}^{}+u_{i,k}^\top Ru_{i,k}^{},
\]
where $(x_{i,\cdot},u_{i,\cdot})$ is the closed-loop trajectory associated with the $i$\textsuperscript{th} Monte Carlo simulation; and this is done separately for each controller.
Since $J_{i,T}$ is evaluated over a finite horizon, it provides a lower bound on the corresponding infinite-horizon cost $J^\phi(x_0)$. For the path-complete controllers, we additionally report the certified upper bound $V(x_0)$. We also report the offline synthesis time $t_{\mathrm{off}}$, the average online execution time $t_{\mathrm{on}}^{\mathrm{avg}}$, and the average total computation time $t_{\mathrm{tot}}^{\mathrm{avg}}$.\\\\
Table~\ref{tab:comparison} highlights several trends. First, our proposed framework achieves computation times that are orders of magnitude lower than those of robust MPC while maintaining comparable closed-loop performance. Comparing the dual De Bruijn controller of order $\ell=1$ with robust MPC equipped with the same terminal cost, both approaches achieve nearly identical average costs ($1196.57$ versus $1196.60$). However, robust MPC requires $1305.11$ seconds of total computation time and $1303.56$ seconds of average online computation time, whereas the proposed controller requires only $1.51$ seconds and $9\times10^{-6}$ seconds, respectively.

Second, terminal costs derived from our framework significantly improve the performance of robust MPC. For instance, at horizon $N=5$, the average cost decreases from $1486.71$ without terminal cost to $1244.74$ when using a common quadratic terminal cost, and further to $1196.60$ when using a dual De Bruijn terminal cost.

Third, the results highlight the benefit of extending the framework beyond complete graphs. While our previous work~\cite{ninite2026pathcomplete} was restricted to complete path-complete graphs, dual (co-complete) De Bruijn graphs achieve lower costs than primal (complete) ones on this benchmark while preserving certified guarantees. This improved performance comes at the expense of higher offline computational costs due to the use of alternating optimization instead of a single semidefinite program.

Finally, increasing the graph order improves both empirical performance and certified upper bounds for primal De Bruijn graphs on this benchmark. In contrast, dual De Bruijn graphs exhibit only minor variations in cost and upper bound as the order increases.\\\\
Figure~\ref{fig:trajectories} illustrates closed-loop trajectories of the three-zone building under four controllers: robust MPC without terminal cost ($N=5$), robust MPC with a first-order dual De Bruijn terminal cost ($N=5$), and path-complete controllers synthesized using primal and dual De Bruijn graphs of order $\ell=4$. The trajectories correspond to a common switching realization and the same initial condition as in the previous experiments.

We observe that the controller synthesized using the primal De Bruijn graph yields a faster convergence to the reference temperature. Moreover, the trajectories obtained with the dual De Bruijn controller and with MPC equipped with a dual De Bruijn terminal cost are nearly indistinguishable, despite the substantial difference in online computational cost highlighted in Table~\ref{tab:comparison}. Finally, the slower convergence of MPC without terminal cost further illustrates the benefit of using guaranteed approaches such as incorporating a terminal cost satisfying the robust Bellman inequality.

\section{Conclusion}
We introduced a path-complete framework for the joint synthesis of control policies and certified upper bounds on their corresponding closed-loop value functions for switched systems subject to arbitrary switching. By associating local functions with the nodes of a path-complete graph and enforcing graph-based Bellman inequalities along its edges, we showed how to construct both a feedback policy and a certified upper bound on its value function through a suitable min--max aggregation of the graph-indexed functions. Previously proposed constructions based on particular classes of path-complete graphs were recovered as special cases of this general result. For linear switched systems with quadratic costs, we further derived tractable computational formulations for controller synthesis and performance certification. Numerical experiments illustrate the proposed approach on several examples, including a building temperature regulation benchmark. The results show that the proposed framework can drastically reduce both total and online computation times compared with robust MPC while maintaining comparable or better closed-loop performance. They further illustrate that the proposed value function certificates are useful beyond controller synthesis, serving as terminal costs for robust MPC.

Future work will investigate the conservatism of the proposed framework, in particular its ability to approximate the optimal value function and optimal policy as the complexity of the underlying path-complete graph increases. Another promising direction is the development of policy-improvement schemes based on path-complete value function certificates, as well as the extension of these ideas beyond switched systems to broader robust decision-making settings such as robust Markov decision processes.
\begin{table}[ht]
\caption{Comparison of robust MPC and path-complete controllers in terms of empirical average cost $\Jmc_T$, offline computation time $t_{\mathrm{off}}$, average online computation time $t_{\mathrm{on}}^{\mathrm{avg}}$, and average total computation time $t_{\mathrm{tot}}^{\mathrm{avg}}=t_{\mathrm{off}}+t_{\mathrm{on}}^{\mathrm{avg}}$. For path-complete controllers, the certified upper bound $V(x_0)$ on the value function is also reported. MPC is evaluated for prediction horizons $N\in\{2,3,4,5\}$, with and without terminal costs, and path-complete controllers are synthesized using primal and dual De Bruijn graphs of orders $\ell\in\{1,2,3,4\}$. Results are averaged over 50 simulations.}
\centering
\setlength{\tabcolsep}{3pt}
\renewcommand{\arraystretch}{0.95}

\begin{tabular}{@{}lccccc@{}}
\toprule
Config. & $\Jmc_T$ & $V(x_0)$ & $t_{\mathrm{tot}}^{\mathrm{avg}}$ [s] & $t_{\mathrm{off}}$ [s] & $t_{\mathrm{on}}^{\mathrm{avg}}$ [s] \\
\midrule

\rowcolor{gray!15}
\multicolumn{6}{c}{\textbf{MPC without terminal cost}} \\
\midrule
$N=2$ & $1562.43$ & -- & 0.78 & -- & 0.78 \\
$N=3$ & $1534.88$ & -- & 2.52 & -- & 2.52 \\
$N=4$ & $1509.72$ & -- & 13.45 & -- & 13.45 \\
$N=5$ & $1486.71$ & -- & 80.36 & -- & 80.36 \\

\midrule
\rowcolor{gray!15}
\multicolumn{6}{c}{\textbf{MPC + common quadratic terminal cost}} \\
\midrule
$N=2$ & $1254.94$ & -- & 2.72 & 0.04 & 2.67 \\
$N=3$ & $1251.29$ & -- & 9.02 & 0.04 & 8.98 \\
$N=4$ & $1247.86$ & -- & 41.06 & 0.04 & 41.02 \\
$N=5$ & $1244.74$ & -- & 400.40 & 0.04 & 400.36 \\

\midrule
\rowcolor{gray!15}
\multicolumn{6}{c}{\textbf{MPC + dual De Bruijn terminal cost ($\boldsymbol{\ell=1}$)}} \\
\midrule
$N=5$ & $1196.60$ & -- & 1305.11 & 1.55 & 1303.56 \\

\midrule
\rowcolor{gray!15}
\multicolumn{6}{c}{\textbf{Path-complete controller (primal De Bruijn graph)}} \\
\midrule
$\ell=1$ & $1261.05$ & 1657.57 & 0.11 & 0.11 & 0.00005 \\
$\ell=2$ & $1261.03$ & 1657.51 & 0.39 & 0.39 & 0.00010 \\
$\ell=3$ & $1226.18$ & 1527.94 & 1.28 & 1.28 & 0.00032 \\
$\ell=4$ & $1214.59$ & 1476.04 & 5.78 & 5.78 & 0.00124 \\

\midrule
\rowcolor{gray!15}
\multicolumn{6}{c}{\textbf{Path-complete controller (dual De Bruijn graph)}} \\
\midrule
$\ell=1$ & $1196.57$ & 1279.75 & 1.51 & 1.51 & 0.000009 \\
$\ell=2$ & $1196.52$ & 1279.74 & 4.16 & 4.16 & 0.000009 \\
$\ell=3$ & $1196.50$ & 1279.72 & 12.66 & 12.66 & 0.000009 \\
$\ell=4$ & $1196.47$ & 1279.71 & 55.06 & 55.06 & 0.000009 \\

\bottomrule
\end{tabular}
\label{tab:comparison}
\end{table}
\begin{figure}[H]
\setlength{\fboxsep}{0pt}
\centering
    \includegraphics[trim={11mm 14mm 12mm 10mm},clip,width=0.55\columnwidth]{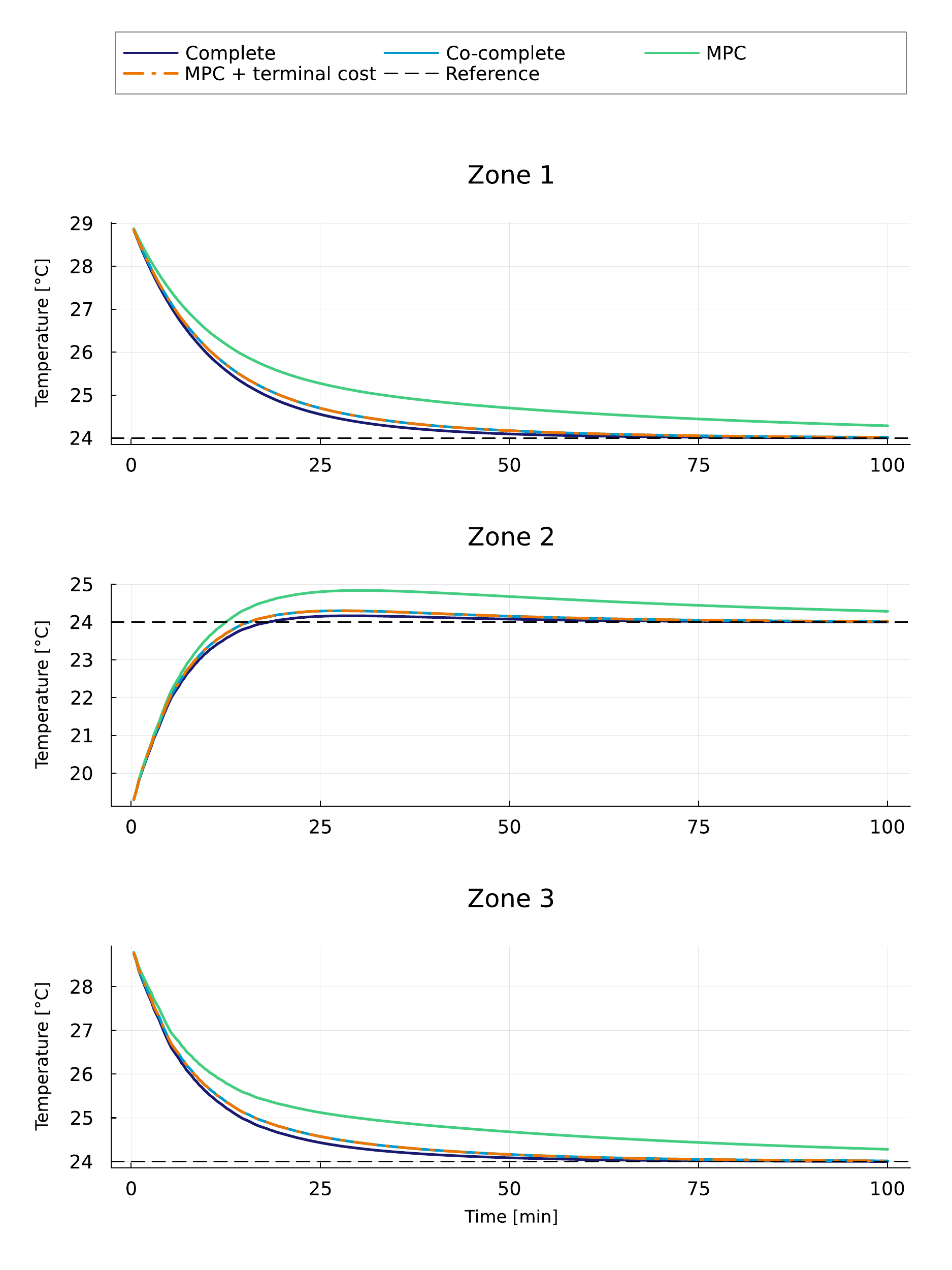}
\caption{Closed-loop state trajectories under four controllers: the proposed path-complete controller based on the primal De Bruijn graph of order 4 (complete), the controller based on the dual De Bruijn graph of order 4 (co-complete), robust MPC with horizon $N=5$, and robust MPC with horizon $N=5$ using a terminal cost derived from the dual De Bruijn graph of order 1. All controllers are simulated under the same switching sequence.}

    \label{fig:trajectories}
\end{figure}
\clearpage
\bibliographystyle{abbrv}
\bibliography{refs}
\appendix
\section{Initialization of Algorithm~\ref{alg:alternating_optimization}}\label{app:stabilizing_policy}
The alternating optimization procedure described in Algorithm~\ref{alg:alternating_optimization} requires an initial feasible point (line 2). To obtain one, we first solve a stabilization problem. To this end, we introduce a scaling factor $\gamma\geq0$ and seek a collection of matrices $\{P_\alpha\}_{\alpha\in\calS}$ and controller gains $\{K_A\}_{A\in\calS_\calH}$ such that
\begin{equation}
\gamma P_\alpha
-(A_i+B_iK_A)^\top P_\beta (A_i+B_iK_A)
\succeq 0,
\label{eq:stabilization_LMIs}
\end{equation}
for every transition $(\alpha,\beta,i)\in\calE$ with $\alpha\in A$. Any feasible solution satisfying $\gamma<1$ guarantees the existence of a stabilizing controller.

Starting from an initial collection of positive definite matrices $\{P_\alpha\}_{\alpha\in\calS}$, we alternately optimize the controller gains $\{K_A\}_{A\in\calS_\calH}$ and the matrices $\{P_\alpha\}_{\alpha\in\calS}$ while minimizing $\gamma$. For fixed $\{P_\alpha\}_{\alpha\in\calS}$, condition~\eqref{eq:stabilization_LMIs} can be reformulated as a set of LMIs using the Schur complement. The controller synthesis problem is then convex, and controller gains minimizing $\gamma$ can be computed. For fixed $\{K_A\}_{A\in\calS_\calH}$, feasibility with respect to $\{P_\alpha\}_{\alpha\in\calS}$ is checked via a semidefinite program, and the smallest admissible value of $\gamma$ is determined by bisection. These two steps are repeated until a solution with $\gamma<1$ is obtained.

The resulting pair $(\{K_A\}_{A\in\calS_\calH},\{P_\alpha\}_{\alpha\in\calS})$ is then used to initialize the alternating optimization procedure described in Algorithm~\ref{alg:alternating_optimization} (cf.~line 2).

\section{SDP formulation of \texorpdfstring{\eqref{eq:K_step}}{(1)}}\label{sec:SDP_formu}
For a fixed value of $\gamma>0$, the constraints of~\eqref{eq:K_step} can be rewritten as
\[
\gamma P_\alpha
-
Q
-
K_A^\top R K_A
-
(A_i+B_iK_A)^\top \gamma P_\beta (A_i+B_iK_A)
\succeq 0.
\]
An application of the Schur complement shows that the above inequality is equivalent to
\[
\begin{bmatrix}
\gamma P_\alpha - Q &
K_A^\top &
(A_i+B_iK_A)^\top
\\[1ex]
K_A &
R^{-1} &
0
\\[1ex]
A_i+B_iK_A &
0 &
\frac1\gamma P_\beta^{-1}
\end{bmatrix}
\succeq 0.
\]
For fixed $\gamma$, $P_\alpha$, and $P_\beta$, this is a linear matrix inequality in the variable $K_A$.

\section{De Bruijn graphs}\label{sec:DB}

In this appendix, we recall the definition of primal and dual De Bruijn graphs used in the numerical experiments.

\begin{definition}[Primal De Bruijn graph]
Let $\ell\in\Ne_{>0}$ and let $\Mset$ be a finite set. The \emph{primal De Bruijn graph} of order $\ell$ is the graph
\[
\calH_\ell(\Mset)=(\calS_\ell,\calE_\ell),
\]
with node set
\[
\calS_\ell=\Mset^\ell.
\]
There is an edge $(\alpha,\beta,i)\in\calE_\ell$ if and only if
\[
\alpha=(j_1,\ldots,j_\ell),
\qquad
\beta=(i,j_1,\ldots,j_{\ell-1}),
\]
for some $(j_1,\ldots,j_\ell)\in\Mset^\ell$.
\end{definition}
\begin{definition}[Dual De Bruijn graph]
Let $\ell\in\Ne_{>0}$ and let $\Mset$ be a finite set. The \emph{dual De Bruijn graph} of order $\ell$ is the graph
\[
\calH_\ell(\Mset)=(\calS_\ell,\calE_\ell),
\]
with node set
\[
\calS_\ell=\Mset^\ell.
\]
There is an edge $(\alpha,\beta,i)\in\calE_\ell$ if and only if
\[
\alpha=(i,j_1,\ldots,j_{\ell-1}), \qquad
\beta=(j_1,\ldots,j_\ell),
\]
for some $(j_1,\ldots,j_\ell)\in\Mset^\ell$.
\end{definition}

Notice that the dual De Bruijn graph of order $\ell$ is obtained by reversing each edge of its primal counterpart. 

Primal De Bruijn graphs of orders $1$ to $3$ are shown in Figure~\ref{fig:db_examples} for the mode set $\Mset=\{1,2\}$.
\begin{figure}[t]
\centering

\begin{tikzpicture}[
    >=stealth,
    every node/.style={circle, draw, minimum size=7mm},
e1/.style={->, thick, blue!70!black},
e2/.style={->, thick, orange!90!black},
]


\begin{scope}[shift={(-4,3)}]

\node[draw=none,font=\small] at (0,3.2) {$\ell=1$};

\node (1) at (-1,0) {1};
\node (2) at (1,0)  {2};

\path[e1]
(1) edge[loop above] (1)
(2) edge[bend left=20] (1);

\path[e2]
(1) edge[bend left=20] (2)
(2) edge[loop above] (2);

\end{scope}


\begin{scope}[shift={(0.8,3)}]

\node[draw=none,font=\small] at (0,3.2) {$\ell=2$};

\node (11) at (90:2)  {11};
\node (12) at (0:2)   {21};
\node (22) at (-90:2) {22};
\node (21) at (180:2) {12};

\path[e1]
(11) edge[loop above] (11)
(12) edge[bend left=15] (21)
(21) edge (11)
(22) edge (21);

\path[e2]
(11) edge (12)
(12) edge (22)
(21) edge[bend left=15] (12)
(22) edge[loop below] (22);

\end{scope}


\begin{scope}[shift={(-1,-4)}]

\node[draw=none,font=\small] at (0,4.4) {$\ell=3$};

\node (111) at (90:3)  {111};
\node (112) at (45:3)  {211};
\node (121) at (0:3)   {121};
\node (122) at (-45:3) {221};
\node (211) at (-90:3) {112};
\node (212) at (-135:3){212};
\node (221) at (180:3) {122};
\node (222) at (135:3) {222};

\path[e1]
(111) edge[loop above] (111)
(112) edge (121)
(121) edge[bend left=72] (211)
(122) edge (221)
(211) edge (111)
(212) edge[bend right=12] (121)
(221) edge[bend right=72] (211)
(222) edge (221);

\path[e2]
(111) edge (112)
(112) edge[bend left=72] (122)
(121) edge[bend right=12] (212)
(122) edge (222)
(211) edge (112)
(212) edge[bend right=72] (122)
(221) edge (212)
(222) edge[loop above] (222);

\end{scope}

\end{tikzpicture}
\caption{Primal De Bruijn graphs of orders $\ell=1$, $\ell=2$, and $\ell=3$ for the mode set $\Mset=\{1,2\}$. Blue and orange edges correspond to labels $1$ and $2$, respectively.}
\label{fig:db_examples}

\end{figure}

\end{document}